\documentclass[11pt,reqno]{amsart}
\usepackage{amsmath,amssymb,amsthm,amsfonts,amscd,color,flafter,graphicx,verbatim,pinlabel,mathrsfs,setspace}
\usepackage[colorlinks=false]{hyperref}
\hypersetup{backref,colorlinks=true,citecolor=blue,linkcolor=blue}
\usepackage{tikz-cd}
\usepackage[none]{hyphenat}
\title{Sutured ECH and Contact 2-Handles}
\author{Yen-Lin Chen}
\address{Department of Mathematics \\ University at Buffalo \\ Buffalo, NY 14260}
\email{yenlinch@buffalo.edu}
\date{\today}

\newtheorem{thm}{Theorem}[section]

\newtheorem{corollary}[thm]{Corollary}
\newtheorem{prop}[thm]{Proposition}
\newtheorem{lemma}[thm]{Lemma}

\theoremstyle{definition}
\newtheorem{Def}[thm]{Definition}
\newtheorem{Ex}[thm]{Example}

\newcommand\isomto{\xrightarrow{\sim}}
\newcommand\ecc{\mathit{ECC}}
\newcommand\ech{\text{ECH}}
\newcommand\HMfrom{\widehat{\mathit{HM}}}
\begin{document}
\bibliographystyle{plain}
\maketitle
\sloppy
\begin{abstract}
We show that there are well-defined maps on sutured ECH induced by contact 2-handle attachments and that the sutured ECH contact class is functorial under such maps.
\end{abstract}
\tableofcontents

\section{Introduction}
\label{sec:introduction}
In \cite{bs2}, Baldwin and Sivek studied the effect of contact handle attachments on Kronheimer and Mrowka's sutured monopole homology \cite{km-excision} and proved an analogue of Honda's bypass exact triangle using \"{O}zba\u{g}cı's description of a bypass attachment as a contact 1- and 2-handle pair attachment that cancel topologically \cite{Ozbagci_bypass}. The aim of this article is to establish the corresponding contact handle attachment maps for Colin--Ghiggini--Honda--Hutchings's sutured embedded contact homology (ECH). The latter has recently been shown to be isomorphic to Juh\'asz's sutured Floer homology \cite{juhasz-sutured} by Colin, Ghiggini, and Honda \cite{cgh-sutured-HF=ECH}.

It was shown in \cite{kst} that there is a natural model for contact 1-handles that works well with sutured ECH, and that attaching a contact 1-handle to a sutured contact 3-manifold induces a canonical isomorphism between sutured ECH of those  contact manifolds before and after the handle attachment. In contrast, a model for contact 2-handles to work with sutured ECH has been elusive. Therefore we adopt an ad hoc approach that requires investigating the Reeb dynamics as we switch between (smooth) convex and sutured boundary conditions. More specifically, given a sutured contact 3-manifold $(M,\Gamma,\alpha)$ and a simple closed curve $\gamma$ on $\partial M$ that intersects $\Gamma$ in exactly two points, we first round corners of $M$ to turn it into a contact 3-manifold with convex boundary, then attach a contact 2-handle along $\gamma$ following Giroux \cite{giroux-convex}, and finally turn the resulting contact 3-manifold with convex boundary into a sutured contact 3-manifold $(M_{\gamma} ,\Gamma',\alpha_{\gamma})$ following a process that takes place in a small neighborhood of the dividing set. With the preceding understood, our main result can be stated as follows:
\begin{thm}
\label{thm:main}
    Let $(M,\Gamma,\alpha)$ be a sutured contact 3-manifold and $\gamma$ be a simple closed curve on its boundary that intersects the suture in exactly two points. Then there is a well-defined map 
    \[
    F_{h_2} : 
    \ech(M,\Gamma,\alpha) 
    \rightarrow
    \ech(M_{\gamma} ,\Gamma',\alpha_{\gamma})
    \]
    that sends the empty orbit set to the empty orbit set.
\end{thm}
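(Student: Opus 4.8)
The plan is to construct $F_{h_2}$ at the chain level directly from the three-step recipe described above—rounding corners, attaching the handle following Giroux, and re-suturing in a neighborhood of the dividing set—and then to check that the resulting map on generators is a chain map and is independent of the auxiliary choices. First I would record explicit local normal forms for the contact form in each region where the geometry is modified: for $\alpha$ in a collar of $\partial M$ adapted to the suture $\Gamma$, for the convex collar produced by rounding corners, and for Giroux's model $2$-handle $D^2 \times [-1,1]$, computing the Reeb vector field in each. The guiding principle is to arrange the forms region by region so that $\alpha_\gamma$ restricts to $\alpha$ on the interior of $M$ outside a thin neighborhood of the attaching curve $\gamma$, so that $\alpha$ and $\alpha_\gamma$ differ only where the boundary condition is actually being changed.

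The core of the argument is the Reeb dynamics. I would prove two things. First, every closed Reeb orbit of $\alpha$ in the interior of $M$ persists as a closed Reeb orbit of $\alpha_\gamma$ with controlled action, so that an admissible orbit set for $(M,\Gamma,\alpha)$ is canonically an admissible orbit set for $(M_\gamma,\Gamma',\alpha_\gamma)$. Second, the corner-rounding collar, the handle $D^2 \times [-1,1]$, and the re-suturing collar should be made to carry no closed Reeb orbits below a prescribed action threshold $L$; in the handle this ought to follow from the fact that in Giroux's model the Reeb flow is transverse to the cocore and exits through the boundary, so no orbit can close up there. Together these statements identify, for each threshold $L$, the generators of the filtered complex of $\alpha$ with those generators of $\alpha_\gamma$ that come from $\mathrm{int}(M)$, compatibly with the empty orbit set on both sides.

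With the generators matched I would define $F_{h_2}$ by sending an orbit set supported in $\mathrm{int}(M)$ to the same orbit set viewed in $M_\gamma$, and the empty orbit set to the empty orbit set. The chain-map property is where the holomorphic curves enter: I would use an action-filtered, ECH-index confinement argument to control the curves counted by the $\alpha_\gamma$-differential. The point is to show that the orbit sets coming from $\mathrm{int}(M)$ are organized by the filtration into a subcomplex of $\ecc(M_\gamma,\Gamma',\alpha_\gamma)$—that the $\alpha_\gamma$-differential of such an orbit set cannot output an orbit set meeting the handle or collar regions—by combining the action estimate from the second step with a $d\alpha_\gamma$-energy (maximum-principle) bound coming from the $J$-convexity of the collar and handle regions. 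Taking the direct limit over $L \to \infty$ then produces the map on the full sutured $\ech$, with $F_{h_2}$ of the empty orbit set equal to the empty orbit set.

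Finally I would verify well-definedness: independence of the contact forms chosen in the three local models and of the almost complex structure $J$, via the standard continuation and chain-homotopy arguments in ECH, so that $F_{h_2}$ descends to a well-defined map on homology. I expect the main obstacle to be precisely the holomorphic-curve confinement across the transition between convex and sutured boundary conditions and inside the handle. Because there is no clean contact-$2$-handle model adapted to sutured ECH, the delicate analytic step is to rule out closed orbits and holomorphic curves straying into the handle, to establish that curves joining two interior orbit sets cannot be forced through the handle region, and to obtain action estimates uniform enough to pass to the limit—this is exactly the ad hoc control of the Reeb dynamics under the change of boundary conditions that the construction cannot avoid.
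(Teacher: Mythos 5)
There is a genuine gap at the heart of your proposal: the claim that the orbit sets supported in $\mathrm{int}(M)$ form a subcomplex of $\ecc(M_\gamma,\Gamma',\alpha_\gamma)$, so that the identity on generators is a chain map. For a contact \emph{1-handle} this kind of confinement is exactly what \cite{kst} proves (via an intersection-theoretic argument, not a maximum principle), and the paper reuses that mechanism in Theorem \ref{thm:suturing_unsutured} to handle the suturing/unsuturing step, where it works because every Reeb trajectory through the modified region escapes to $R_\pm(\Gamma')$ and one can build a null-homologous $3$-chain forcing $\mathcal{C}\cdot(\mathbb{R}\times\gamma_0)=0$. No such argument is available for a $2$-handle: even though the handle carries no closed orbits, holomorphic curves counted by the $\alpha_\gamma$-differential between two \emph{interior} orbit sets can and do pass through the handle region, and there is no $J$-convexity or energy bound that excludes them---the handle changes the topology of the completion $\mathbb{R}\times M_\gamma^{*}$, creating new relative homology classes of curves. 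Indeed, if your confinement held at every action level $L$, then (since all generators of $\ecc(M_\gamma)$ would lie in $\mathrm{int}(M)$) $F_{h_2}$ would be induced by a filtered bijection of generators intertwining the differentials, forcing $\ech(M_\gamma,\Gamma',\alpha_\gamma)\cong\ech(M,\Gamma,\alpha)$; this contradicts the fact that contact $2$-handle attachment genuinely changes sutured invariants (under $\ech\cong\mathit{SFH}$ the $2$-handle map is in general neither injective nor surjective). A secondary problem is that even where curves must be counted, ECH cobordism-type maps cannot be defined by direct holomorphic-curve counts; the paper's maps come from Seiberg--Witten theory via \cite[Theorem 1.9]{ht2}.

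The paper's actual route avoids chain-level curve confinement in the handle entirely. Following Baldwin--Sivek \cite{bs2}, it attaches the $2$-handle \emph{together with an auxiliary contact $1$-handle along its belt sphere}, and Proposition \ref{prop:BS} identifies the result with contact $(+1)$-surgery along a Legendrian push-off $\gamma'$ in the \emph{interior}. Proposition \ref{prop:handle_surgery} then produces the surgery map by embedding the sutured manifold into closed contact closures $(Y_n,\alpha_n)$, attaching a symplectic $2$-handle to the symplectization $[0,1]_s\times Y_n$ to get an exact symplectic cobordism, and invoking the filtered cobordism maps of \cite{ht2}; Section \ref{sec:proof33} removes the dependence on the neck length $n$ and on the tailored almost complex structures before taking the direct limit over $L$. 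Finally, Section \ref{sec:suturingvshandle} shows suturing commutes with the auxiliary $1$-handle attachment, so the $1$-handle can be removed by the canonical isomorphism of \cite{kst}, yielding $F_{h_2}$. If you want to salvage your plan, the missing idea is precisely this conversion of the boundary operation into an interior exact-cobordism operation; without it, the subcomplex step fails and no amount of action filtration or index confinement will repair it.
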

\noindent In upcoming work in progress, we prove an analogue of the bypass exact triangle for sutured ECH and discuss some applications.

The organization of this article is as follows. In Section \ref{sec:preliminaries}, we give a quick review of $\ech$ and its sutured version as well as a brief account of the convex surface theory. We refer the reader to \cite{hutchings-budapest} for a more detailed exposition of $\ech$. Also in that section, we show that going from sutured to smooth convex boundary condition and back does not change $\ech$ and we flesh out contact 2-handle attachment in our context following Giroux's recipe \cite{giroux-convex}. In Section \ref{sec:contact_2-handles}, we describe the prototypical contact 2-handle attachment map on sutured ECH and defer some details of the construction to Section \ref{sec:proof33}. Section \ref{sec:suturingvshandle} proves that attaching a contact 1-handle before or after switching from smooth convex to sutured boundary condition does not change sutured ECH.  
\subsection*{Acknowledgements}
The author was supported in part by a Simons Foundation grant No. 519352 and he is very grateful to his advisor \c{C}a\u{g}atay Kutluhan for suggesting this problem, for posing and pushing the questions, for enduring the author's rambles, rewrites and stalls and for meticulously reading and revising an early draft. The author also thanks Agniva Roy and Yuan Yao for helpful conversations.
\section{Preliminaries}
\label{sec:preliminaries}
\subsection{Definitions and conventions}
\subsubsection{Sutured contact 3-Manifolds}
A convex sutured contact 3-manifold is a balanced sutured 3-manifold equipped with a contact structure co-oriented by an adapted contact form. We include the Definitions 2.1 and 2.2 from \cite{cgh-sutured-HF=ECH} for completeness. (cf. \cite[Definitions 2.7 and 2.8]{cghh}.)
\begin{Def}
    A balanced \emph{sutured 3-manifold} is a triple $(M , \Gamma , U (\Gamma))$, where $M$ is a compact 3-manifold with boundary and corners, $\Gamma$ is an oriented 1-manifold in $\partial M$ called the suture, and $U(\Gamma) \simeq [-1 , 0] \times \Gamma \times [-1 , 1]$ is a neighborhood of $\Gamma \simeq \{ 0 \} \times \Gamma \times \{ 0 \}$ in $M$ with coordinates $(\tau , t) \in [-1, 0] \times [-1,1]$, such that the following hold
    \begin{itemize}\leftskip-0.25in
        \item $M$ has no closed components, 
        \item $U(\Gamma) \cap \partial M \simeq (\{ 0 \} \times \Gamma \times [-1,1]) \cup ([-1 , 0] \times \Gamma \times \{ -1 \}) \cup ([-1 , 0] \times \Gamma \times \{ 1 \})$,
        \item 
        $\partial M \setminus (\{ 0 \} \times \Gamma \times (-1,1))$ is the disjoint union of two submanifolds which we call $R _ - (\Gamma)$ and $R _ + (\Gamma)$, where the orientation of $\partial M$ agrees with that of $R _ + (\Gamma)$ and is opposite that of $R _ - (\Gamma)$, and the orientation of $\Gamma$ agrees with the boundary orientation of $R _ + (\Gamma)$,
        \item 
        the corners of $M$ are precisely $\{ 0 \} \times \Gamma \times \{ \pm 1 \}$,
        \item 
        $R _ {\pm} (\Gamma)$ have no closed components and $\chi (R _ - (\Gamma)) = \chi (R _ + (\Gamma))$.
    \end{itemize}
\end{Def}

\begin{Def}
    Let $(M , \Gamma , U (\Gamma))$ be a balanced sutured 3-manifold. Then $(M , \Gamma , U (\Gamma) , \xi)$ is a \emph{sutured contact manifold} if there exists a contact form $\alpha$ for $\xi$ with Reeb vector field $R _ {\alpha}$ such that:
    \begin{itemize}\leftskip-0.25in
        \item $R _ {\alpha}$ is positively transverse to $R _ + (\Gamma)$ and negatively transverse to $R _ - (\Gamma)$,
        \item 
        $\alpha = Cdt + \beta$ on $U (\Gamma)$ for some constant $C > 0$, where $\beta$ is independent of $t$. In particular, $R _ {\alpha} = \frac{1}{C} \partial _ t$ on $U (\Gamma)$.
    \end{itemize}
\end{Def}

Note that the following are immediate consequences of the above definitions:
\begin{itemize}\leftskip-0.25in
    \item 
    The boundary $\partial M$ comes in three components: $R _ {\pm} (\Gamma)$, $\{ 0 \} \times \Gamma \times [-1 ,1]$ which we shall refer to as the upper/lower and vertical boundaries of $M$, denoted by ${\partial _ {h}}^\pm M$ and $\partial _ {v} M$, respectively.
    \item 
    $(R _ {\pm} (\Gamma) , \beta _ {\pm} = \alpha | _ {R _ {\pm} (\Gamma)})$ are Liouville manifolds (see \cite[Definition 2.8]{cghh}) with respective Liouville vector fields restricting to $\partial _ {\tau}$ over $R _ {\pm} (\Gamma) \cap U(\Gamma)$.
    \item 
    On $U(\Gamma) \simeq [-1 , 0] _ {\tau} \times \Gamma _ \theta \times [-1 ,1] _ t$,
    \begin{equation}
    \label{alpha_over_vert}
        \alpha = C dt + e ^ {\tau} \beta_0
    \end{equation}
     where $\beta_0$ is a volume form on $\Gamma$.
    \item 
    The $t$-coordinate function extends to collar neighborhoods of ${\partial _ {h}}^\pm M$ by using the flow of $C R _ {\alpha}$: $(1 - \varepsilon , 1] _ t \times R _ + (\Gamma)$, $[-1 , -1 + \varepsilon) _ t \times R _ - (\Gamma)$ over which 
    \begin{equation}
    \label{alpha_over_pm}
        \alpha = Cdt + \beta _ {\pm}
    \end{equation}
     respectively.
\end{itemize}
\begin{Ex}{\cite[Example 2.9]{cghh}}
    Let $(S , \beta)$ be a 2-dimensional Liouville domain with a collar neighborhood $\partial S \times [-1 , 0] _ {\tau}$ of $\partial S$ afforded by the Liouville field identified with $\partial _ {\tau}$ then
    \[
    (S \times [-1 , 1] _ t , \partial S \times \{ 0 \} ,  \partial S \times [-1 , 0] _ {\tau} \times [-1 ,1 ] _ t , dt + \beta)
    \]
    is a sutured contact 3-manifold.
\end{Ex}

\subsubsection{Sutured Embedded Contact Homology}
Here we give a brief overview of Hutchings's embedded contact homology (ECH) and its sutured version as defined by Colin, Ghiggini, Honda, and Hutchings. We refer the reader to \cite{hutchings-budapest} for details on the definition and structure of ECH.

Given a closed contact three manifold $Y$ equipped with a nondegenerate contact form $\lambda$ and some fixed $\Gamma \in H _ 1 (Y)$, the embedded contact homology $\ech (Y , \lambda , \Gamma ; J)$ is the homology of a chain complex freely generated by admissible Reeb orbit sets $\{ (\alpha _ i , m _ i) \}$ representing $\Gamma$, whose differential counts $\ech$ index $1$ holomorphic curves asymptotic at its positive/negative ends to such orbit sets in $\mathbb{R} \times Y$ equipped with a generic $\mathbb{R}$-invariant symplectization-admissible almost complex structure $J$. 

Although its definition relies on particular choices of contact form $\lambda$ and almost complex structure $J$, it was shown by Taubes in \cite{taubes1} that there is a canonical isomorphism between ECH and Seiberg--Witten Floer cohomology:
\begin{equation*}
    \ech _ {*} 
    (Y , \lambda , \Gamma ; J) 
    \simeq
    \HMfrom ^ {- *}
    (Y , \mathfrak{s} _ {\xi} + PD(\Gamma)).
\end{equation*}
Consequently, $\ech$ is in fact a topological invariant. In contrast, filtered $\ech$, $\ech ^ L (Y , \lambda , \Gamma , J)$, which is defined by those orbit sets $\alpha = \{ ( \alpha _ i , m _ i)\}$ with symplectic action
\[
\mathcal{A} (\alpha)
=
\int _ {\alpha} \lambda 
= \sum _ i m _ i \int _ {\alpha _ i} \lambda
\]
less than some fixed $L>0$ is sensitive to the choice of the contact form $\lambda$. Filtered $\ech$ was shown to be independent of a choice of almost complex structure in \cite{ht2}; hence it defines canonical groups $\ech ^ L _ {*} (Y , \lambda , \Gamma)$ that form a directed system such that 
\[
\lim 
_ {L \rightarrow \infty}
\ech ^ L (Y , \lambda, \Gamma) 
= 
\ech (Y , \lambda, \Gamma).
\]
When there is no need to specify the homology class $\Gamma$, one can work with
\[
\ech ^ L _ * (Y , \lambda) 
:= 
\bigoplus _ {\Gamma \in H _ 1 (Y)}
\ech ^ L _ * (Y , \lambda , \Gamma),
\]
which satisfy
\[
\lim 
_ {L \rightarrow \infty}
\ech ^ L (Y , \lambda) 
= 
\ech (Y , \lambda).
\]
Moreover, given an exact symplectic cobordism $(X , \omega = d \lambda)$ from $(Y _ +, \lambda _ +)$ to $(Y _ -, \lambda _ -)$ and $L>0$, there is a cobordism induced map
\[
\Phi ^ L (X , \omega) :
\ech ^ L (Y _ + , \lambda _ +)
\rightarrow
\ech ^ L (Y _ - , \lambda _ -)
\]
satisfying the properties listed in \cite[Theorem 1.9]{ht2}. 

For a sutured contact 3-manifold $(M , \Gamma , U (\Gamma) , \alpha)$, its sutured ECH is defined in \cite{cghh} to be the homology of the chain complex $\ecc(M, \Gamma, U(\Gamma), \alpha)$ generated by admissible orbit sets that lie in the interior of $M$ and with differential defined by counting ECH index 1 holomorphic curves in $\mathbb{R} \times M ^ *$ where \[M^\ast = M _ v 
    \cup ([0 , \infty) _ \tau \times \Gamma \times \mathbb{R} _ t ),\] with 
\begin{itemize}\leftskip-0.25in
    \item 
    $M _ v$ a vertical extension of $(M , \Gamma , U(\Gamma) , \alpha )$ by
    $(R _\pm(\Gamma) \times [1 , \infty) _ t , Cdt + \beta _ \pm)$ attached along $R _ {\pm} (\Gamma)$, respectively,
    \item 
    $([0 , \infty) _ {\tau} \times \Gamma \times \mathbb{R} _ t , Cdt + e ^ {\tau} \beta_0 )$ is attached to $M _ v$ along the infinite cylinder boundary $\{ 0 \} _ {\tau} \times \Gamma \times \mathbb{R} _ t$,
\end{itemize}
and $\mathbb{R} \times M ^ *$ is endowed with an almost complex structure tailored to $(M ^ * , \alpha ^ *)$. (See \cite[Section 3.1]{cghh} for definition of a tailored almost complex structure and \cite[Section 5]{cghh} as to why working with such an almost complex structure is necessary.) The resulting homology group is denoted by $\ech (M , \Gamma , \alpha , J)$. 

Like ECH, its sutured version also admits a filtration by symplectic action. For $L>0$, filtered sutured ECH groups are denoted by $\ech^L(M , \Gamma , \alpha , J)$, and 
\[
\lim 
_ {L \rightarrow \infty}
\ech ^ L (M , \Gamma , \alpha , J) 
= 
\ech (M , \Gamma , \alpha , J).
\]
It was shown in \cite{cgh-openbook}, and independently in \cite{kst}, that filtered sutured ECH depends only on the underlying sutured 3-manifold and the contact structure. In fact, it was shown in \cite{kst} that sutured ECH assigns a canonically defined group to a sutured contact 3-manifold that depends a priori only on the restriction of an adapted contact form on the boundary. More recently, Colin, Ghiggini, and Honda have shown that sutured ECH is isomorphic to sutured Floer homology of Juh\'asz \cite{juhasz-sutured}, settling \cite[Conjecture 1.5]{cghh}. Therefore, sutured ECH is a topological invariant of the underlying sutured 3-manifold. 

\subsubsection{Convex hypersurfaces in contact manifolds}

\begin{Def}
    \label{def:convex_surface}
    A hypersurface $\Sigma$ within a $(2n + 1)-$dimensional contact manifold $(M , \xi = \ker \alpha)$ is said to be convex if there is a contact vector field $X$ defined in a tubular neighborhood $N(\Sigma)$ of $\Sigma$ and is transverse to $\Sigma$, i.e. $\mathcal{L} _ X \alpha = \mu \alpha$ for some $\mu \in C ^ {\infty} (N(\Sigma))$ and $X_p\notin T_p\Sigma$ for $p\in\Sigma$ (see \cite[Lemma 1.5.8 (b)]{geiges_intro}.)
\end{Def}

    Convexity is a local condition since $X$ is generated by a contact Hamiltonian function (see \cite[Theorem 2.3.1]{geiges_intro}.) Having fixed $X$, the dividing set $\Gamma$ on $\Sigma$, defined to be $\{ x \in \Sigma \ | \ X (x) \in \xi (x)\}$, divides $\Sigma$ into two subsurfaces denoted by $\Sigma_\pm$ over which $X$ is positively/negatively transverse to $\xi$, respectively. 
    
    Using the flow of $X$, one can identify a collar neighborhood of $\Sigma$ with $\mathbb{R} _ s \times \Sigma$ which we shall refer to as a vertically invariant neighborhood of $\Sigma$. On this neighborhood, we can write 
    \begin{equation}
    \label{eq:convex_form}
        \alpha = m (f ds + \beta)
    \end{equation}
    where 
    \begin{enumerate}\leftskip-0.25in
        \item $m = \exp (-\int _ {0} ^ {s} \mu(\zeta,p) d\zeta)$ where $\mathcal{L} _X \alpha = \mu \alpha$ (cf. \cite[Section 4.6.2]{geiges_intro}),
        \item $\beta \in \Omega ^ 1 (\Sigma)$ and $f \in C ^ \infty (\Sigma)$ both of which are independent of $s$. 
    \end{enumerate}
    Thus every convex hypersurface $\Sigma$ admits a collar neighborhood that is contactomorphic to $(\mathbb{R} _ s \times \Sigma , fds + \beta)$.
    
    With the above identification understood, the dividing set $\Gamma$ is cut out by the zero locus of $f$, i.e. $\{ f = 0\} \subset \Sigma$ whereas $\Sigma_\pm = \{ \pm f > 0\}$. Using the contact condition 
    \[
    \alpha \wedge (d \alpha) ^ n
    =
    f ds (d \beta) ^ n 
    +
    n df ds \beta (d \beta) ^ {n-1} >0,
    \]
    one can see that $df \neq 0$ along $\Gamma$ and hence $\Gamma$ is a codimension-1 submanifold of $\Sigma$ which in the three dimensional case, $n=1$, amounts to a disjoint union of embedded circles.  

\subsection{From sutured to convex boundary and back}
\label{ss:sutured-convex}
In order to describe the effect of contact 2-handle attachment on sutured ECH, we adopt Baldwin and Sivek's strategy in \cite{bs2} that interprets contact 2-handle attachment on a contact 3-manifold with convex boundary as performing contact surgery along a parallel copy of the attaching curve within the interior of a vertically invariant neighborhood of the boundary. To do so, we first switch from sutured to convex boundary condition, then attach a contact 2-handle on the resulting contact 3-manifold with convex boundary following Giroux \cite{giroux-convex}, and finally switch back from convex to sutured boundary condition. 

To begin, we recall how to switch from sutured to convex boundary condition as is described in \cite[Section 4.1]{cghh}. Given a sutured contact 3-manifold $(M,\Gamma, U(\Gamma),\mathit{ker}(\alpha))$, the latter amounts to rounding its corners and it takes place entirely within the neighborhood $U(\Gamma)$ of its vertical boundary parametrized by
\begin{align*}
    [-1 , 1] _ t \times (-1 , 0 ] _ \tau \times \Gamma _ \theta,
\end{align*}
To be precise, we describe the new boundary after rounding corners in terms of the graph of a convex even function $\Xi:[-1,1]\to\mathbb{R}$ that is smooth on $(-1,1)$ such that $\Xi(0) = 0$, $\dot{\Xi}(0)=0$, $\dot{\Xi}\neq0$ for $|t|>1-\epsilon$ with $\epsilon\in(0,\frac{1}{100})$, and $\lim_{t\to\pm1} \dot{\Xi}(t)= \mp \infty$, so that points on the new boundary are defined by 
\begin{align*}
    (t , \tau , \theta) = (t , \Xi(t) , \theta).
\end{align*}

To verify that the new boundary is indeed convex we use a $\theta$-invariant contact vector field $X$ whose restriction to $U(\Gamma)$ is of the form
\begin{align}
    X = a(t) \partial _ t + \dot{a}(t) \partial _ \tau,
\end{align}
and satisfies $\mathcal{L} _ {X} \alpha = \dot{a} \alpha$ where $a$ is a non-decreasing function that is equal to $t / C$ for $|t| \leq \frac{\epsilon}{2}$ and to $1/C$ (resp. $-1/C$) for $t \geq 1 - \frac{\epsilon}{2}$ (resp. $t \leq -1 + \frac{\epsilon}{2}$.) With $X$ so defined, it is straightforward to check that it is transverse to the boundary. 

Thus in a collar neighborhood of the boundary of the resulting contact manifold $(u(M) , u(\alpha) = \alpha | _ {u(M)})$ with convex boundary defined by the flow of $X$, $\alpha$ can be expressed as
\begin{align}
    \label{unsuturing_1-form}
    e ^ {\frac{s}{C}} \big( tds + C dt + e ^ {\Xi(t)} d \theta \big),
\end{align}
 where the flow of $X = \partial _ s$ can be solved explicitly as $(t , \tau , \theta) \mapsto (e ^ {\tfrac{s}{C}} t , \tau + \tfrac{s}{C} , \theta)$ with $|t|\leq\frac{\epsilon}{2}$. We shall refer to $(u(M) , u(\alpha) = \alpha | _ {u(M)})$ as the \emph{unsuturing} of $(M,\Gamma, U(\Gamma),\mathit{ker}(\alpha))$.

Next we outline the general recipe of switching from convex to sutured boundary condition following the proof of \cite[Lemma 4.1]{cghh}. Let $(M, \alpha)$ be a contact 3-manifold with convex boundary then in a collar neighborhood $(-\infty , 0] _ s \times \Sigma$ of $\partial M = \Sigma$ supplied by a contact vector field $X$, the contact form reads
\begin{align*}
    m (f ds + \beta)
\end{align*}
where $m$ is determined by the re-scaling factor of the Lie derivative of $\alpha$ with respect to $X$ and both $f$ and $\beta$ are $s$-invariant. Recall that $\Sigma_\pm$ are determined by $f > 0$ or $f < 0$, and the dividing set $\Gamma \subset \Sigma$ is cut out by the condition $f=0$. With the preceding understood, the suturing process can be summarized in six steps:

\begin{enumerate}\leftskip-0.25in
    \item 
    \label{step:(1)_suturing}
    By a positive conformal re-scaling we may assume that $f$ is only a function of $\tau$ within an annular neighborhood of the dividing set $A(\Gamma) \simeq [-1 ,1] _ \tau \times \Gamma _ \theta$ and that it equals $\pm 1$ on $\Sigma_\pm \cap \{ |\tau| \geq 1/4\}$. In order to guarantee that the restriction of $\alpha$ to $R _ \pm(\Gamma)$ away from a neighborhood of $\Gamma$ to be Liouville forms, we may require the associated Reeb vector field to be positively/negatively transverse along $R _ \pm(\Gamma)$, which can be achieved by re-scaling $\alpha$ so that $f = \pm 1$ on $R _ \pm(\Gamma)$. 
    \item 
    Using the characteristic line field directed by the vector field whose $\tau$-component is $\partial _ \tau$, we can parameterize $A(\Gamma)$ so that we have no $d \tau$-term in the restriction of $\beta $ to $A(\Gamma)$. 
    \item 
    \label{step:(3)_suturing}
    Since $\frac{d \beta}{d \tau}$ shares the same kernel as $\beta$, there is a positive function $a(\tau , \theta)$ so that $\frac{d \beta}{d \tau} = a(\tau , \theta) \beta$ and hence 
    \begin{align*}
        \beta (\tau , \theta) 
        & = \exp (\int _ {0} ^ {\tau} a(\zeta , \theta) d\zeta)\,\beta(0 , \theta) 
    \end{align*}
    Thus $\alpha$ can be written as
    \begin{align*}
        m(f(\tau) ds + g(\tau , \theta) \beta _ 0)
    \end{align*}
    where $\beta_0 = \beta(0,\theta)$ and $g$ is a positive function that satisfies
    \begin{itemize}\leftskip-0.25in
        \item $\frac{\partial g}{ \partial \tau} < 0 $ if $\tau >0$, and
        \item $\frac{\partial g}{ \partial \tau} > 0 $ if $\tau <0$.
    \end{itemize}
    \item 
    To further remove the $\theta$-dependence on $g$, first consider the following contact form 
    \begin{align*}
        \widetilde{\alpha} := m(f(\tau) ds + \widetilde{g} (\tau , \theta) \beta _ 0)
    \end{align*}
    where $\widetilde{g}$ is a positive function such that
    \begin{itemize}\leftskip-0.25in
        \item $\widetilde{g} = g$ when $\tau$ is near to $\pm 1$,
        \item $\widetilde{g} = \widetilde{g}(\tau)$ when $|\tau| < 1/2$,
        \item $\widetilde{g}$ satisfies the same conditions as $g$ listed above.
    \end{itemize}
    Then apply Moser-Weinstein technique to the linear isotopy starting with $\widetilde{\alpha}$ and ending with $\alpha$ to reparameterize $A(\Gamma)$ so that under the new parameterization $\alpha$ can be written as 
    \begin{align*}
        \alpha = \eta m (f(\tau) ds + \widetilde{g}(\tau) \beta _ 0),
    \end{align*}
    for $|\tau| < 1/2$, where $\eta$ is a positive function.
    \item 
    \label{step:(5)_suturing}
    Let $u = f(\tau) \widetilde{g}(\tau) ^ {-1}$ to write 
    \begin{align*}
    \alpha = \eta m \widetilde{g}^{-1} (u ds + \beta _ 0),
    \end{align*}
    or equivalently as
    \begin{align*}
    \alpha = h _ 0 (u ds + \beta _ 0).
    \end{align*}
    Note that by the contact condition $du/d\tau > 0$, hence $u$ is a new coordinate. Next use the following map from \cite{kevin_sackel}
    \begin{align*}
        \phi : (u , s , x) \mapsto (u ,s , \psi ^ {\frac{1}{2} us} (\theta)),
    \end{align*}
    where $\psi$ denotes the flow of the vector field on $\Gamma$ dual to $\beta _ 0$. The map $\phi$  satisfies
    \begin{align*}
    \phi ^ {*} (\tfrac{1}{2} (uds - sdu ) + \beta _ 0) = uds + \beta _ 0
    \end{align*}
    and with the polar coordinate on the $us$-plane we arrive at
    \begin{align*}
        \alpha = h _ 0 (\tfrac{1}{2} r ^ 2 d \varphi + \beta _ 0)
    \end{align*}
    defined on the half disk
    \begin{equation}
        \label{suturing_half_disk}
        U = \{ (u,s) = (r \cos \varphi , r \sin \varphi)\; |\; 0 \leq r \leq \delta , \pi \leq \varphi \leq 2 \pi \}.
    \end{equation}
    \item 
    \label{step:(6)_suturing}
    Finally, within $U$ perturb $h _ 0$ to get a function $h = h (r)$ that satisfies
    \begin{itemize}\leftskip-0.25in
        \item $\tfrac{\partial h}{\partial r} < 0$,
        \item $h = C_ 0 / r^{2}$ when $\varepsilon / 2 \leq r \leq \varepsilon$ and $\varepsilon$ is some positive constant $< \delta$.
    \end{itemize}
\end{enumerate}
Now we are ready to prove the following.
\begin{thm}
\label{thm:suturing_unsutured}
For each $L>0$, there exists a canonical isomorphism \[\ech^L(s(u(M)),\Gamma', s(u(\alpha)))\isomto \ech^L(M , \Gamma, \alpha).\]
\end{thm}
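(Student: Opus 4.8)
The plan is to realize the isomorphism at the chain level, exploiting the fact that both the unsuturing $u$ and the six-step suturing $s$ alter the contact data only within a collar of $\partial M$. First I would record precisely where $\alpha$ is changed: corner rounding is supported in $U(\Gamma)$ and replaces $\alpha$ there by the convex normal form \eqref{unsuturing_1-form}, while Steps \ref{step:(1)_suturing}--\ref{step:(6)_suturing} reshape $\alpha$ --- by positive conformal rescalings, reparametrizations, and one Moser--Weinstein isotopy --- only inside an annular neighborhood of the dividing set, culminating in the half-disk model \eqref{suturing_half_disk}. All of these steps preserve $\xi=\ker\alpha$ and are the identity away from a neighborhood $N$ of $\partial M$. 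I would therefore fix an identification of $M\setminus N$ with its image in $s(u(M))$ carrying $\alpha$ to $s(u(\alpha))$, under which the dividing set produced by $s$ is identified with $\Gamma$ (so $\Gamma'\cong\Gamma$) and the two underlying balanced sutured $3$-manifolds, together with their contact structures, are canonically identified.

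Next I would check that all chain generators live in this common region. By \eqref{alpha_over_vert} the Reeb field is $\tfrac{1}{C}\partial_t$ along the vertical boundary and by \eqref{alpha_over_pm} it is transverse to $\partial_h^\pm M$, so no closed orbit meets the collar; the suturing recipe returns a boundary with exactly the same features, so the same holds for $s(u(\alpha))$. Hence the admissible orbit sets generating $\ecc^L(M,\Gamma,\alpha)$ and $\ecc^L(s(u(M)),\Gamma',s(u(\alpha)))$ lie in $M\setminus N$ and are in canonical bijection. As the action $\int_\gamma \alpha$ of such an orbit is computed entirely in the common region where the two forms agree, this bijection preserves symplectic action exactly and therefore restricts to the generators below each threshold $L$.

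It remains to identify the differentials, which I expect to be the main obstacle. Fixing tailored almost complex structures on the completions $M^*$ and $s(u(M))^*$ that agree on the common region and have the standard form on the vertical and horizontal ends, the curves counted by the two ECH differentials are asymptotic to interior orbits, so I must show their signed index-one counts coincide. The trapping estimates for tailored almost complex structures in \cite{cghh} confine these curves to a compact subset of the interior away from the ends; the remaining issue is the reshaped collar $N$, where the two forms genuinely differ. Here I would argue that the reshaping constitutes a trivial exact cobordism --- a product outside $N$ and an interpolation of the conformal-plus-reparametrization data inside $N$, a region carrying no closed orbits --- so that the associated filtered cobordism map is a filtered chain isomorphism; this is also the content of the invariance of filtered sutured ECH under change of adapted contact form established in \cite{cgh-openbook} and \cite{kst}. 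Verifying that this map is action-preserving and invertible, via a maximum principle across the level sets bounding $N$ together with the explicit models \eqref{unsuturing_1-form} and \eqref{suturing_half_disk}, is the technical heart of the argument.

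Passing to homology then yields the desired isomorphism for each $L$. Finally I would confirm canonicity: two sets of admissible choices in $u$ and $s$ --- the profile $\Xi$ and the function $a$, and the rescalings, the isotopy, and the perturbation $h$ of Steps \ref{step:(1)_suturing}--\ref{step:(6)_suturing} --- differ by contactomorphisms supported near $\partial M$ and equal to the identity on the common interior, hence induce the same bijection of generators and the same chain-level isomorphism, so the resulting map on $\ech^L$ is independent of these choices.
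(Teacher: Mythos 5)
Your first step (common generators, matching actions) agrees in outcome with the paper's, but even there your justification is incomplete: the region where $\alpha$ and $s(u(\alpha))$ differ is strictly larger than the new vertical neighborhood of the suture --- it contains the whole half-disk region $U\times\Gamma'$ of \eqref{suturing_half_disk}, where the Reeb vector field is \emph{not} $\tfrac{1}{C}\partial_t$, so ``the boundary has the same features'' does not by itself exclude closed orbits from the modified collar. The paper rules them out by observing that the choice of $h$ in Step \eqref{step:(6)_suturing} makes the Reeb field of $s(u(\alpha))$ positively transverse to every $\varphi$-constant hyperplane in $U\times\Gamma'$, so every trajectory meeting the modified region runs from $R_-(\Gamma')$ to $R_+(\Gamma')$ and never closes up; it also notes that \eqref{unsuturing_1-form} lets one skip Step \eqref{step:(1)_suturing}, confining all dynamical changes to $U$.

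The genuine gap is in your treatment of the differentials, which you correctly flag as the technical heart but then defer to mechanisms that would not work. A ``trivial exact cobordism'' between the two forms does not yield a \emph{filtered chain isomorphism}: ECH cobordism maps are defined via Seiberg--Witten theory, are not chain maps counting holomorphic curves, and for sutured manifolds require closures (exactly the machinery of Section \ref{sec:proof33} of this paper), so invoking them here is close to circular and in any case only controls actions in one direction; likewise no maximum principle confines an index-one curve away from the collar where the forms differ, since the tailored-$J$ trapping estimates of \cite{cghh} control the horizontal and vertical \emph{ends} of $s(u(M))^\ast$, not a compact region abutting the interior. The paper instead proves the two differentials are \emph{literally equal}: supposing a differential curve $\mathcal{C}$ projects onto some $p_0\in U(\Gamma)\cap s(u(M))$, it takes the Reeb trajectory $\gamma_0$ through $p_0$, a trajectory $\gamma_1$ through a point of the completion $\widehat{R_+(\Gamma')}\setminus R_+(\Gamma')$, joins their endpoints by a path in $\widehat{R_+(\Gamma')}$ lifted to a disk $D_P$ of Reeb trajectories with $|t|\leq T$, and forms the $3$-chain $A=\mathrm{id}\times\widetilde{P}:[-S,S]\times D_P\to\mathbb{R}_s\times s(u(M))^\ast$; since $\partial A$ is null-homologous and $\mathcal{C}$ misses the faces at $s=\pm S$, $t=\pm T$, one gets $0=\mathcal{C}\cdot\partial A=\mathcal{C}\cdot(\mathbb{R}\times\gamma_1)-\mathcal{C}\cdot(\mathbb{R}\times\gamma_0)$, and $\mathcal{C}\cdot(\mathbb{R}\times\gamma_1)=0$ by \cite[Lemma 5.5]{cghh}, so $\mathcal{C}\cdot(\mathbb{R}\times\gamma_0)=0$, contradicting positivity of intersections. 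This intersection-theoretic confinement is the missing idea in your proposal; without it (or an equivalent), the claimed action-preserving identification of differentials at each fixed level $L$ --- which is stronger than the invariance statements of \cite{cgh-openbook} and \cite{kst} you cite --- is not established.
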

\begin{proof}
We start by arguing that no new closed Reeb orbits are introduced as a result of suturing the unsutured. To see this, note that equation \eqref{unsuturing_1-form} suggests that Step \eqref{step:(1)_suturing} of the suturing process can be omitted and all potential changes in Reeb dynamics lie within the half disk $U$ specified in \eqref{suturing_half_disk}. The fact that the restriction of the Reeb vector field for $s(u(\alpha))$ to $U\times\Gamma'$ is positively transverse to every $\varphi$-constant hyperplane, which is guaranteed by our choice of the function $h$ in Step \eqref{step:(6)_suturing} of the suturing process (cf. proof of \cite[Lemma 4.1]{cghh}), implies that no integral curve of the Reeb vector field intersecting $U(\Gamma) \cap s(u(M))$ would close up. In fact, every maximal integral curve of the Reeb vector field for $s(u(\alpha))$ starts at a point on $R_-(\Gamma')$ and ends at a point on $R_+(\Gamma')$. Consequently, closed Reeb orbits of $(s(u(M)), s(u(\alpha)))$ lie in the interior of $s(u(M))\setminus U(\Gamma)$, hence they are the same as closed Reeb orbits of $(M,\alpha)$. 

Next, we use intersection theory for holomorphic curves as in the proof of \cite[Lemma 3.1]{kst} to argue that no holomorphic curve that contributes to the differential on $\ech(s(u(M)),\Gamma', s(u(\alpha)))$ projects onto $U (\Gamma)\cap s(u(M))$. Suppose that such a curve $\mathcal{C}$ exists and fix a point $p_0$ in $U (\Gamma)\cap s(u(M))$ that lies in the projection of $\mathcal{C}$. Let $\gamma_0$ denote the maximal integral curve of the Reeb vector field for $s(u(\alpha))^\ast$ through $p_0$ and denote by $x_0$ its intersection with $R_+(\Gamma')$. Next, let $x_1\in \widehat{R _ + (\Gamma ' )} \setminus R _ + (\Gamma ')$ where $\widehat{R _ + (\Gamma ')}$ denotes the completion of the Liouville domain $R _ + (\Gamma')$ and $\gamma_1$ be the maximal integral curve of the Reeb vector field for $s(u(\alpha))^\ast$ through $x_1$. Since $\mathcal{C}$ is asymptotic to Reeb orbits contained in the interior of $M\setminus U(\Gamma)\subset s(u(M))$, there exit sufficiently large $S,T>0$ such that the $s-$ and the $t-$coordinates of $\mathcal{C}$ in $\mathbb{R}_s\times (s(u(M))^\ast\setminus s(u(M)))$ have absolute values bounded respectively by $S$ and $T$. Having fixed a path $P:[0,1]\to \widehat{R _ + (\Gamma ')}$ connecting $x_0$ to $x_1$ in the complement of $\overline{R_+(\Gamma)\setminus U(\Gamma)}$ in $R_+(\Gamma')$, let $\widetilde{P}:D_P\to s(u(M))^\ast$ be its lift to a 2-disk $D_P$ by integral curves of the Reeb vector field for $s(u(\alpha))^\ast$ whose $t-$coordinates are constrained to satisfy $|t|\leq T$, and consider the $3$-chain
\begin{align*}
    A=id \times \widetilde{P} : [-S , S] \times D_P \rightarrow \mathbb{R} _ s \times s(u(M)) ^ {*}.
\end{align*}
\noindent Note that $\partial A$ is null-homologous, hence $\mathcal{C}\cdot\partial A=0$. Since the $s-$ and the $t-$ coordinates of $\mathcal{C}\cap\partial A$ cannot be equal to $\pm S$ and $\pm T$, respectively, we conclude that 
\[0=\mathcal{C}\cdot\partial A=\mathcal{C}\cdot(\mathbb{R}\times\gamma_1)-\mathcal{C}\cdot(\mathbb{R}\times\gamma_0).\]
Moreover, $\mathcal{C}\cdot(\mathbb{R}\times\gamma_1)=0$ because $\mathcal{C}$ does not project onto $\widehat{R _ + (\Gamma ' )} \setminus R _ + (\Gamma ')$ by \cite[Lemma 5.5]{cghh}. As a result, $\mathcal{C}\cdot(\mathbb{R}\times\gamma_0)=0$, contradicting our assumption.
\end{proof}
\subsection{Contact 2-handle attachment in sutured ECH}
\label{ss:contact_2-handle}
We now describe how to attach a contact 2-handle to a sutured contact 3-manifold $(M,\Gamma,U(\Gamma),\mathit{ker}(\alpha))$ along an attaching curve $\gamma$ intersecting the suture $\Gamma$ at exactly two points. This shall be done in three steps:
\begin{enumerate}\leftskip-0.25in
    \item Unsuture the sutured contact 3-manifold $(M,\Gamma,U(\Gamma),\mathit{ker}(\alpha))$ to get a contact 3-manifold $(u(M),u(\alpha))$ with convex boundary where $\Gamma$ becomes the dividing set.
    \item Attach a contact 2-handle on $(u(M),u(\alpha))$ following Giroux's recipe \cite{giroux-convex} along an isotopic copy of $\gamma$ which agrees with $\gamma$ in the complement of where rounding of corners occurs. 
    \item Suture the resulting contact 3-manifold $(u(M)_2,u(\alpha)_2)$.
\end{enumerate}
For the first step, we further require that the contact vector field as in the beginning of Section \ref{ss:sutured-convex} is chosen so that there is a collar neighborhood of the dividing set $A(\Gamma) \simeq \Gamma \times [-1,1]$ on $\Sigma=\partial u(M)$ with which 
\begin{itemize}\leftskip-0.25in
    \item 
    the characteristic foliation of $\ker \alpha | _ {u(M)}$ on $A(\Gamma)$ is an interval-fibration over $\gamma$ parametrized as $ \gamma \times [-1,1]$ so that $\partial A(\gamma) = \gamma \times \{ \pm 1\}$, and
    \item 
    the function $f$ in equation \eqref{eq:convex_form} is equal to $\pm 1$ over $\Sigma_\pm \setminus (\Gamma \times [-1/2 , 1/2])$, where the restriction of $\beta$ defines Liouville forms.
\end{itemize}

For the second step, we first match the characteristic foliations on a neighborhood $A(\gamma)$ of the attaching curve and that of the attaching circle as in Giroux's contact 2-handle model 
$(H _ {2} , \zeta _ 2 = \ker \alpha _ 2)$ where $H _ 2 = \{ (x,y,z) \in \mathbb{R} ^ 3\; |\; x ^ 2 + z ^ 2 \leq 1 , |y| \leq \varepsilon\}$ and $\alpha _ 2 = dz + ydx + 2xdy$ (see \cite[Section III.3.B]{giroux-handle} and \cite[Section 1]{Ozbagci_bypass}). Then we match the contact structures. To elaborate, construct a 1-form $\beta _ 0$ on $\partial u(M)$ satisfying
\begin{itemize}\leftskip-0.25in
    \item $\beta _ 0 = \beta$ on $A(\Gamma)$,
    \item $\beta _ 0$ is a Liouville form on $\Sigma _ {\pm} \setminus (\Gamma \times [-1/2 , 1/2])$,
    \item the kernel of $\beta _ 0$ defines a singular foliation on $\partial u(M)$ whose restriction to $A(\gamma)$ coincides with that around the attaching circle of $(H _ 2 , \zeta _ 2)$.
\end{itemize}
Then apply the Moser-Weinstein technique to the family of contact forms 
$\alpha _ {\nu} = m(fds + \beta _ {\nu})$ where $\beta _ {\nu} = \beta _ {0} (1 - \nu) + \beta \nu$ so as to solve the equation
\begin{align*}
    \phi _ {\nu} ^{*} \alpha^{} _ {\nu} = \alpha _ 0
\end{align*}
as in \cite{kst}. Use the time-1 map $\phi _ 1$ to isotope $u(M)$ within a collar neighborhood so that $\ker \alpha _ {1} = \ker \alpha$ induces the required characteristic foliation over $\phi _ 1 (A(\gamma))$. Note that
\begin{itemize}\leftskip-0.25in
    \item 
    the isotopy $\phi _ {\nu}$ is the identity over the region $A(\Gamma) \times \mathbb{R}_{s}$ where $\alpha _ {\nu}$ is constant,
    \item 
    the perturbed boundary $\phi _ 1 (\partial u(M))$ is still transverse to $\partial _ s$, and
    \item 
    $\phi _ \nu$ is a \emph{strict} contactomorphism.
\end{itemize}
Consequently, we have
\begin{corollary}
    \label{cor:s(u(M))=M}
    For each $L>0$, there exists a canonical isomorphism \[\ech^L(s(\widetilde{u}(M)),\Gamma', s(\widetilde{\alpha}))\isomto \ech^L(M , \Gamma, \alpha)\]
    where $\widetilde{u}(M)$ is the manifold obtained from $u(M)$ after a perturbation needed to match the characteristic foliation of $A(\gamma)$ with that of $(H _ 2 , \zeta _ 2)$ around its attaching circle; and $\widetilde{\alpha}$ denotes the restriction of $\alpha$ to $\widetilde{u}(M)$.
\end{corollary}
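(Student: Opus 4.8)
The plan is to obtain Corollary~\ref{cor:s(u(M))=M} as a direct consequence of Theorem~\ref{thm:suturing_unsutured}, with the perturbation $\widetilde{u}(M)$ inserted as an intermediate step that is invisible at the level of filtered sutured ECH. First I would observe that Theorem~\ref{thm:suturing_unsutured} already supplies, for each $L>0$, a canonical isomorphism $\ech^L(s(u(M)),\Gamma',s(u(\alpha)))\isomto\ech^L(M,\Gamma,\alpha)$. Thus it suffices to produce a canonical isomorphism $\ech^L(s(\widetilde{u}(M)),\Gamma',s(\widetilde{\alpha}))\isomto\ech^L(s(u(M)),\Gamma',s(u(\alpha)))$ and compose the two. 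The essential point is that passing from $u(M)$ to $\widetilde{u}(M)$ is effected by the time-$1$ map $\phi_1$ of the Moser--Weinstein isotopy constructed in Section~\ref{ss:contact_2-handle}, and that $\phi_1$ is a \emph{strict} contactomorphism.

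Next I would exploit the three bulleted properties of the isotopy $\phi_\nu$ recorded just before the corollary. Because $\phi_\nu$ is a strict contactomorphism, it carries the contact form $\widetilde{\alpha}$ to $u(\alpha)$ on the nose, hence it identifies the Reeb vector fields, the closed Reeb orbits, their symplectic actions, and the almost complex structures (after transporting $J$ by $\phi_1$). Consequently $\phi_1$ induces a tautological identification of the filtered chain complexes $\ecc^L(s(\widetilde{u}(M)),\Gamma',s(\widetilde{\alpha}))$ and $\ecc^L(s(u(M)),\Gamma',s(u(\alpha)))$, and therefore an isomorphism on homology. Since $\phi_\nu$ is the identity on the region $A(\Gamma)\times\mathbb{R}_s$ where $\alpha_\nu$ is constant, the modification is localized away from the dividing-set neighborhood and, in particular, does not interfere with the suturing data; so the suture $\Gamma'$ and the completion $M^\ast$ are preserved, and the isomorphism is compatible with the directed systems in $L$.

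The only genuine content beyond bookkeeping is to verify that the perturbed boundary still admits a valid suturing, so that the symbol $s(\widetilde{u}(M))$ makes sense. Here I would invoke the second bulleted property, namely that $\phi_1(\partial u(M))$ remains transverse to $\partial_s$: this guarantees that the perturbed boundary is still a convex hypersurface with the same dividing set, so the six-step suturing recipe of Section~\ref{ss:sutured-convex} applies verbatim to $(\widetilde{u}(M),\widetilde{\alpha})$ and produces $s(\widetilde{u}(M))$ with suture $\Gamma'$. I expect this transversality check to be the main obstacle, since one must confirm that the Moser--Weinstein isotopy, which is tailored to match characteristic foliations on $A(\gamma)$, does not destroy convexity of the boundary elsewhere; once transversality is in hand, the rest is the formal composition described above.
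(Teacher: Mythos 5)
Your proposal is correct and takes essentially the same route as the paper, whose proof likewise combines Theorem \ref{thm:suturing_unsutured} with the observation that $\phi_1$ furnishes a strict contactomorphism between $(\widetilde{u}(M),\widetilde{\alpha})$ and $(u(M),u(\alpha))$ fixing a collar neighborhood of $A(\Gamma)$, so the suturing data and filtered complexes are identified tautologically. Your explicit transversality check for $\phi_1(\partial u(M))$ is just an unpacking of the bulleted properties of $\phi_\nu$ that the paper records before the corollary.
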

\begin{proof}
This follows readily from Theorem \ref{thm:suturing_unsutured} and the fact that there is a strict contactomorphism between $(\widetilde{u}(M),\widetilde{\alpha})$ and $(u(M),u(\alpha))$ fixing a collar neighborhood of $A(\Gamma)$.
\end{proof}
To complete the description of the contact 2-handle attachment process, we note that if $\psi : F _ 2 \rightarrow \phi _ 1 (A(\gamma))$ is an attaching map that intertwines the characteristic foliations then it follows from \cite[III.3.B]{giroux-convex} that there is an isotopy $\varphi _ {\nu}$ of $\phi _ 1 (\partial u(M))$ and a positive function $\mu$ such that 
\begin{align*}
\widetilde{\alpha} = ((\varphi _ 1 \circ \psi )^{-1})^ {*}(\mu \alpha _ 2)    
\end{align*}
In other words, the contact manifold $(H _ 2 , \mu \alpha _ 2)$ can be attached along the curve $\varphi _ 1 (\psi (\phi _ 1(\gamma)))$ and extends the contact form $\widetilde{\alpha}$ on $\widetilde{u}(M)$. We denote the resulting contact manifold with convex boundary by 
$(\widetilde{u}(M) _ 2 , \widetilde{\alpha} _ 2)$.

\section{Contact 2-handle attachments and contact surgery}
\label{sec:contact_2-handles}
In this section, we adapt Baldwin and Sivek's recipe in \cite{bs2} to define contact 2-handle attachment maps on sutured monopole homology to sutured ECH. Baldwin and Sivek show in \cite[Section 4.2.3]{bs2} that attaching a contact 2-handle on a contact 3-manifold with convex boundary along a curve $\gamma$ that intersects the dividing set in exactly two points followed by an auxiliary contact 1-handle attached along the belt sphere of the 2-handle is contactomorphic to contact $(+1)$-surgery along a parallel copy $\gamma'$ of $\gamma$ in the interior of the manifold. This is because $\gamma'$ can be chosen to be a Legendrian knot by the Legendrian Realization Principle, and since $\gamma$ intersects the dividing set in exactly two points, contact $(+1)$-surgery along $\gamma'$ is the same as boundary-framed surgery. The following proposition describes Baldwin and Sivek's contactomorphism in our context. We include its proof for sake of completeness and for future reference.
\begin{prop}[cf. Section 4.2.3 in \cite{bs2}] Let 
    \label{prop:BS}
    $\widetilde{u}(M)_{2,1}$ denote the contact 3-manifold with convex boundary obtained from $\widetilde{u}(M)$ by attaching a contact 2-handle along a simple closed curve $\gamma\subset \partial\widetilde{u}(M)$ intersecting the dividing set of $\widetilde{u}(M)$ in exactly two points followed by an auxiliary contact 1-handle attached along the belt sphere of that 2-handle, $\widetilde{u}(M)'$ denote the result of performing contact (+1)-surgery on $\widetilde{u}(M)$ along $\gamma'$, and $\widetilde{\alpha}_{2,1}$  denote the extension of $\widetilde{\alpha}$ to $\widetilde{u}(M)_{2,1}$. Then there is a contactomorphism 
    \[
    \mathbf{BS} : (\widetilde{u}(M)_{2,1} , \widetilde{\alpha}_{2,1}) \rightarrow (\widetilde{u}(M)',  \widetilde{\alpha}'),
    \]
    where $\widetilde{\alpha}'= (\mathbf{BS}^{-1})^\ast\widetilde{\alpha}_{2,1}$. The contactomorphism $\mathbf{BS}^{-1}$ restricts to the identity in the complement of a vertically invariant neighborhood of $\gamma$.\qed
\end{prop}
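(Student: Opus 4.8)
The plan is to produce $\mathbf{BS}$ as a contactomorphism that is the identity away from a vertically invariant neighborhood of $\gamma$ and that, inside that neighborhood, identifies the contact $2$-handle/$1$-handle pair with the trace of contact $(+1)$-surgery along a Legendrian parallel copy $\gamma'$ of $\gamma$. I would follow Baldwin and Sivek's argument in \cite[Section 4.2.3]{bs2}, adapting it to the present setting.

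First I would fix a vertically invariant collar $(-\infty,0]_s\times\Sigma$ of $\Sigma=\partial\widetilde{u}(M)$ on which $\widetilde{\alpha}=m(f\,ds+\beta)$, and push a parallel copy $\gamma'$ of $\gamma$ onto a convex surface $\Sigma_{-\delta}=\{s=-\delta\}$ in the interior, with dividing set isotopic to $\Gamma$. Since $\gamma$ meets the dividing set transversely in exactly two points, so does $\gamma'$, and hence $\gamma'$ is non-isolating; the Legendrian Realization Principle then lets me $C^{0}$-isotope $\gamma'$, through an isotopy supported near $\Sigma_{-\delta}$, to a Legendrian curve. Its twisting relative to the surface framing of $\Sigma_{-\delta}$ is $-\tfrac12\,\#(\gamma'\cap\Gamma)=-1$, so the contact (Thurston--Bennequin) framing of $\gamma'$ is the surface framing minus one. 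Consequently contact $(+1)$-surgery, which uses the framing $tb+1$, is performed with exactly the surface framing; this is the boundary-framed surgery, and establishing this equality is the key numerical input.

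Next I would place the two constructions side by side on a common solid-torus region. Attaching the contact $2$-handle $(H_2,\mu\alpha_2)$ along $\gamma$ modifies $\Sigma$ by surgery along $\gamma$, and the auxiliary contact $1$-handle, glued along the two belt disks of the $2$-handle, reverses this change so that the new boundary is again diffeomorphic to $\Sigma$; the net effect is to excise and reglue a solid-torus neighborhood of the interior curve $\gamma'$, i.e. a Dehn surgery on $\gamma'$. I would check that the resulting framing matches the surface framing computed above, so that the underlying manifolds of $\widetilde{u}(M)_{2,1}$ and $\widetilde{u}(M)'$ are diffeomorphic by a map supported near $\gamma$. To upgrade this to a contactomorphism I would match the contact structures on the surgery solid torus: both sides restrict to a tight contact structure on a solid torus with convex boundary carrying the same dividing set (two parallel longitudes), and such a germ is determined up to isotopy rel boundary. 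A Moser-type argument, exactly as in the construction of $(H_2,\mu\alpha_2)$ in Section \ref{ss:contact_2-handle}, then yields the gluing contactomorphism, which I would extend by the identity over the complement of the neighborhood of $\gamma$.

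The main obstacle will be making the identification in the previous paragraph precise at the level of contact forms rather than merely smooth structures, and keeping careful track of coorientations and framings so that the ``$+1$'' of contact $(+1)$-surgery is reproduced with the correct sign. Concretely, I would work in Giroux's explicit model $\alpha_2=dz+y\,dx+2x\,dy$ on $H_2$ together with the standard tubular model of a Legendrian knot, and match the characteristic foliations on the gluing tori as is already done when $(H_2,\mu\alpha_2)$ is attached. With the local contactomorphism built on the surgery solid torus and the identity used elsewhere, the assembled map $\mathbf{BS}$ has the stated support, and $\widetilde{\alpha}'=(\mathbf{BS}^{-1})^{*}\widetilde{\alpha}_{2,1}$ holds by definition.
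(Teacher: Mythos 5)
Your proposal is correct in outline and follows the same Baldwin--Sivek blueprint as the paper (identity away from a vertically invariant neighborhood $N$ of $\gamma$, the twisting computation $tw(\gamma')=-\tfrac12\#(\gamma'\cap\Gamma)=-1$ making contact $(+1)$-surgery equal to boundary-framed surgery, which the paper states in the preamble to the proposition), but the engine producing the contactomorphism over the surgery solid torus is genuinely different. The paper never invokes the Legendrian Realization Principle or any classification theorem inside the proof: it rounds corners of $N$ so that $\partial N$ is convex with two \emph{meridional} dividing curves, extends the identity on $\widetilde{u}(M)_{2,1}\setminus N_1\to\widetilde{u}(M)\setminus N$ across collar neighborhoods by choosing a diffeomorphism $\sigma:A_1\to A(\gamma)$ that matches dividing sets and (after an isotopy $\rho_t$ supported away from the common boundary) intertwines characteristic foliations, and then \emph{defines} $\mathbf{BS}$ as the extension of the resulting collar contactomorphism $f$ over the solid torus ${N_1}'$; the surgered manifold $(\widetilde{u}(M)',\widetilde{\alpha}')$ is effectively constructed by this gluing, with the $(+1)$-surgery identification read off from the fact that $\mathbf{BS}|_{N_1}$ interchanges meridian and longitude. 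This keeps everything at the level of contact \emph{forms} (which is what the ECH constructions downstream require) and makes $\widetilde{\alpha}'=(\mathbf{BS}^{-1})^\ast\widetilde{\alpha}_{2,1}$ tautological. Your route instead builds $\widetilde{u}(M)'$ independently as a contact $(+1)$-surgery and appeals to uniqueness of the tight contact structure on a solid torus with convex boundary and two longitudinal dividing curves, plus a Moser argument. That is a legitimate alternative (arguably closer to Baldwin--Sivek's original), but it carries debts the paper's proof avoids: you must verify that the handle-region solid torus $N_1$ is tight, and that after corner rounding the dividing curves on $\partial N$ are two meridians of $N$ --- your ``two parallel longitudes'' is the correct slope precisely because these become longitudes of the \emph{reglued} torus under the meridian--longitude swap, a bookkeeping point worth making explicit (applied to $N$ itself, two meridional dividing curves would be incompatible with tightness, so the uniqueness theorem must be applied on the surgery-torus side). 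Finally, uniqueness up to isotopy rel boundary yields a contactomorphism of contact structures only; this suffices here solely because the proposition defines $\widetilde{\alpha}'$ as a pushforward, as your last sentence correctly notes.
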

\begin{proof}
    Fix a transverse contact vector field $X$ near $\partial\widetilde{u}(M)$ so as to define a vertically invariant collar neighborhood of $\partial\widetilde{u}(M)$. Following \cite[Section 4.2.3]{bs2}, use $X$ to define a vertically invariant neighborhood  $N$ of $\gamma$ and denote by $N_1$ the union of $N$ with the 2- and 1-handle pair attached as in \cite[Figures 18 and 19]{bs2}\footnote{It should be noted that in contrast with \cite{bs2}, contact handles in our case do not have Legendrian corner. Instead, we round corners after attaching contact handles as in \cite[Figure 5]{Ozbagci_bypass}.}. Round corners of $N$ so as to make its boundary a convex torus with two meridional dividing curves. Do this in such a way that $A(\gamma)=\partial N \cap \partial \widetilde{u}(M) \cong [-1,1] \times \gamma$.

    The identity map 
    \[
    id : \widetilde{u}(M)_{2,1} \setminus N _ 1 \rightarrow \widetilde{u}(M) \setminus N
    \]
    extends to a contactomorphism between some collar neighborhoods of $\partial N _ 1$ and $\partial N$, which is unique up to isotopy. To be more precise, note that part of the boundary of $N _ 1$ agrees with that of $N$ and replaces $A(\gamma)$ by another annulus which we denote by $A _ 1$. We may assume that the two intersect near their common boundary 
    \[
    A(\gamma) \cap A _ 1 \cong ([-1 , -\frac{3}{4}] \cup [\frac{3}{4} , 1]) \times \gamma.
    \]
    Next fix a diffeomorphism $\sigma : A _ 1 \rightarrow A (\gamma)$ that restricts to the identity on $A(\gamma) \cap A _ 1$ and use $\sigma$ to extend $id : \partial N _ 1 \setminus A _ 1 \rightarrow \partial N \setminus A (\gamma)$ to a diffeomorphism $\overline{\sigma} : \partial N _ 1 \rightarrow \partial N$ that identifies the dividing sets. We may assume that $\sigma$ intertwines the characteristic foliations on $A_1$ and $A(\gamma)$. Otherwise, perturb $\partial N_1$ via an isotopy supported in a vertically invariant neighborhood of $A _ 1 \setminus A(\gamma)$ and replace $\sigma$ by $\rho _ 1 \circ \sigma$ where $\rho _ t : A(\gamma) \rightarrow A (\gamma)$ is an isotopy that is identity on $([-1 , -\frac{3}{4}] \cup [\frac{3}{4} , 1]) \times \gamma$. The extension of $\overline{\sigma}$ to some vertically invariant collar neighborhoods of $\partial N _ 1$ and $\partial N$ then extends $id : \widetilde{u}(M)_{2,1} \setminus N _ 1 \rightarrow \widetilde{u}(M) \setminus N$ to a contactomorphism
    \[
    f:
    \widetilde{u}(M)_{2,1} \setminus {N _ 1}'
    \rightarrow 
    \widetilde{u}(M) \setminus N', 
    \]
    where ${N_1}'\subset\textrm{int}(N_1)$ and $N'\subset \textrm{int}(N)$ are the complements of the aforementioned collar neighborhoods of $\partial N_1$ and $\partial N$, respectively. 
    Finally, glue the solid torus ${N _ 1}'$ onto $\widetilde{u}(M) \setminus N'$ in a way that extends $f$. The map $\mathbf{BS}$ is nothing but the extension of $f$ over ${N _ 1}'$. Note that gluing ${N_1}'$ onto $N\setminus N'$ results in a solid torus obtained by performing boundary-framed surgery along $\gamma'$ in $N$. In fact, the restriction of $\mathbf{BS}$ to $N_1$ is a diffeomorphism between two solid tori that identifies the collar neighborhoods of boundary tori and interchanges the meridian and the longitude. Since $\gamma$ intersects the dividing set on $\partial N$ in exactly two points, the resulting contact manifold $\widetilde{u}(M)'$ is contactomorphic to contact (+1)-surgery on $(\widetilde{u}(M),\textrm{ker}(\widetilde{\alpha}))$ along~$\gamma'$.
\end{proof}

With the above proposition understood, it suffices to show that contact (+1)-surgery along $\gamma'$ induces a homomorphism between sutured $\ech$ of the two sutured contact 3-manifolds before and after the surgery.
\begin{prop}
    \label{prop:handle_surgery}
    Let $(M, \alpha)$ be a sutured contact 3-manifold and $K$ be a Legendrian knot in the interior of $M$. Then there exists a well-defined map 
    \[\mathbf{F} _ {K}: \ech(M , \Gamma , \alpha) \rightarrow \ech(M ' , \Gamma , \alpha_{K})\]
    where $M'$ is the result of contact $(+1)$-surgery along $K$, and $\alpha'$ is a certain contact form extending the restriction of $\alpha$ to the complement of a standard neighborhood of $K$.  
\end{prop}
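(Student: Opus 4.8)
The plan is to realize contact $(+1)$-surgery along $K$ as an exact symplectic cobordism and to define $\mathbf{F}_K$ as the map it induces on filtered sutured $\ech$, passed to the direct limit in $L$. First I would fix a standard contact neighborhood $N(K)$ of the Legendrian knot $K$ in the interior of $M$ and arrange the surgery to be supported in $N(K)$, so that the surgered contact form $\alpha_K$ agrees with $\alpha$ on $M\setminus N(K)$; I would also take both $\alpha$ and $\alpha_K$ to be nondegenerate and fix generic tailored almost complex structures on the completions $M^\ast$ and $(M')^\ast$.

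Next I would build an exact symplectic cobordism $(X,\omega=d\lambda)$ from $(M,\alpha)$ to $(M',\alpha_K)$ that is the trivial symplectization outside $N(K)\times\mathbb{R}$ and models the surgery near $K$. The orientation of the ends is the one required by the $\ech$ cobordism maps of \cite{ht2}: since contact $(+1)$-surgery on $K$ is inverse to a Legendrian $(-1)$-surgery on a dual Legendrian knot $K'\subset M'$, the cobordism $X$ is Weinstein, with convex (positive) end $(M,\alpha)$ and concave (negative) end $(M',\alpha_K)$. In particular $\omega$ is exact and $\lambda$ restricts to $\alpha$ and $\alpha_K$ on the two ends. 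Because the surgery takes place in the interior, away from $\partial M$ and the suture, the cobordism is a product over the vertical extension and over the cylindrical end along $\Gamma$, so it extends to an exact cobordism $X^\ast$ between the completions that is trivial on the sutured ends.

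I would then apply the filtered cobordism map $\Phi^L(X,\omega)\colon \ech^L(M,\alpha)\to\ech^L(M',\alpha_K)$ and check, using the properties in \cite[Theorem 1.9]{ht2}, that these maps are compatible with the action-filtration inclusions $\ech^L\to\ech^{L'}$ for $L<L'$; taking $\lim_{L\to\infty}$ then yields $\mathbf{F}_K\colon \ech(M,\Gamma,\alpha)\to\ech(M',\Gamma,\alpha_K)$. Independence of the auxiliary data — the precise local model of the surgery, the choice of nondegenerate contact forms, and the generic almost complex structures — would follow from the composition and homotopy-invariance properties of cobordism maps, establishing well-definedness.

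The main obstacle is that the cobordism maps of \cite{ht2} are constructed for closed contact manifolds, whereas here $X^\ast$ is noncompact with cylindrical sutured ends, so the moduli spaces of holomorphic curves in $\mathbb{R}\times X^\ast$ need not be compact a priori. The crux is therefore to show that the curves contributing to $\Phi^L$ stay in a compact region and do not escape into the sutured ends. I expect this to be handled by an intersection-theoretic argument paralleling the proof of Theorem \ref{thm:suturing_unsutured} (and \cite[Lemma 3.1]{kst}): using that every Reeb trajectory through $U(\Gamma)$ runs from $R_-(\Gamma)$ to $R_+(\Gamma)$, one intersects a putative escaping curve with the $\mathbb{R}$-invariant cylinders over such trajectories and derives a contradiction from positivity of intersections. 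A secondary point is to choose $\alpha_K$ so that the new Reeb orbits created by the surgery have controlled action, ensuring the filtered complexes and the filtration maps behave as required.
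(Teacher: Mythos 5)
There is a genuine gap at the heart of your construction: you propose to build an exact cobordism between the \emph{sutured} manifolds themselves, invoke the filtered cobordism maps of \cite[Theorem 1.9]{ht2}, and treat compactness of holomorphic curves in the completed cobordism as the only remaining difficulty. But the maps of \cite{ht2} are not defined by counting holomorphic curves at all: they exist only for cobordisms between \emph{closed} contact $3$-manifolds and are constructed via Taubes's isomorphism with Seiberg--Witten Floer cohomology, precisely because a direct curve-count definition of ECH cobordism maps (broken curves, multiple covers, failure of transversality) is not available even in the closed case. Consequently there is no map to which your intersection-theoretic ``no escape into the sutured ends'' argument could be applied: on a noncompact sutured cobordism neither the Seiberg--Witten construction nor a holomorphic-curve construction of $\Phi^L(X,\omega)$ exists, and proving a compactness statement would not by itself produce a chain map. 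Note also that the argument you borrow from Theorem \ref{thm:suturing_unsutured} (and \cite[Lemma 3.1]{kst}) controls the \emph{differential} of a fixed sutured manifold, not a cobordism-induced map, so it does not transfer to your setting.

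The paper avoids forming a sutured cobordism altogether. Following \cite{kst}, it embeds $(M,\Gamma,\alpha)$ into closed contact closures $(Y_n,\alpha_n)$ having the same closed Reeb orbits of action less than $L$ (for neck length $n$ sufficiently large relative to $L$), attaches the symplectic $2$-handle along $K$ in the concave end of $([0,1]_s\times Y_n, d(e^s\alpha_n))$ --- which is why the map is pre-composed with the scaling isomorphism $\ech(M,\Gamma,\alpha)\isomto\ech(M,\Gamma,e\alpha)$, a factor absent from your sketch --- applies \cite[Theorem 1.9]{ht2} to the resulting closed exact cobordism $(X_n,\omega_n)$, and conjugates by the closure isomorphisms $\Phi_n^{L,J}$. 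The genuinely hard part of well-definedness, occupying Section \ref{sec:proof33}, is then showing that the resulting map is independent of the neck length $n$ (via a homotopy of cobordisms interpolating the forms $\varphi_{n,\nu}^{\ast}\omega_{\nu}$, cf.\ \cite[Proposition 4.3]{kst} and the relations \eqref{eq:technical_relation_lower}--\eqref{eq:technical_relation_upper}) and equivariant under change of tailored almost complex structures $J_0,J_1$, so that the maps assemble into a direct limit over $L$. Your appeal to generic ``composition and homotopy-invariance properties'' cannot substitute for this, since in your framework the map whose invariance you would need is never defined.
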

\begin{proof}
 As is explained in \cite[Section 2.3]{kst}, given $L>0$ there exists a closed, contact 3-manifold $(Y_n,\alpha_n)$ such that $(M,\alpha)$ embeds into and both have the same set of closed Reeb orbits of symplectic action less than $L$ (see also \cite[Section 10.3]{cgh-openbook}). Here $n$ is a positive number that can be chosen to be arbitrarily large as long as it is sufficiently large with respect to $L$. Furthermore, $\ech^L(M , \Gamma , \alpha)$ and $\ech^L(Y_n , \alpha_n)$ are canonically isomorphic by \cite[Lemma 3.5]{kst}. By \cite[Proposition 6.4.3]{geiges_intro}, attaching a symplectic 2-handle along $K$ in the concave end of the symplectic cobordism $([0,1] _ s \times Y_n , d(e^{s} \alpha_n) )$ results in an exact symplectic cobordism $(X_n,\omega_n)$. Then the map $\mathbf{F}_K$ arises as the direct limit of maps obtained by pre-composing the maps
\[{F_K}^L : \ech^{L}(M , \Gamma , e\alpha)
\rightarrow 
\ech^L(M ' , \Gamma , \alpha_{K})\]
induced by $(X_n,\omega_n)$ with the canonical scaling isomorphism \cite[Theorem 1.3(d)]{ht2}
\[
\ech (M , \Gamma , \alpha) 
\isomto
\ech (M, \Gamma , e \alpha).
\]
Details of the construction of these maps and their properties are deferred to Section \ref{sec:proof33}.
\end{proof}

Composing the maps we have so far, we arrive at a prototypical version of the desired 2-handle attachment map:
\begin{thm}
    \label{thm:pre_2_handle_map}
    There exists a well-defined map 
    \[\underline{F}_{h_2}:\ech(M,\Gamma,\alpha)\to\ech(s(\widetilde{u}(M)_{2,1}),\Gamma'',s(\widetilde{\alpha}_{2,1})).\]
\end{thm}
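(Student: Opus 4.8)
The plan is to obtain $\underline{F}_{h_2}$ as a composition of the two isomorphisms and the one map already produced in Corollary \ref{cor:s(u(M))=M}, Proposition \ref{prop:handle_surgery}, and Proposition \ref{prop:BS}. First I would use the inverse of the canonical isomorphism of Corollary \ref{cor:s(u(M))=M} to identify the source $\ech(M,\Gamma,\alpha)$ with $\ech(s(\widetilde{u}(M)),\Gamma',s(\widetilde{\alpha}))$. This reduces the problem to producing a map out of the sutured ECH of the suturing of the perturbed unsutured manifold, in which the attaching data near $A(\gamma)$ have already been arranged to match Giroux's contact 2-handle model.

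Next I would invoke Proposition \ref{prop:handle_surgery} with the ambient sutured contact 3-manifold taken to be $(s(\widetilde{u}(M)),\Gamma',s(\widetilde{\alpha}))$ and the Legendrian knot $K$ taken to be the parallel copy $\gamma'$ of the attaching curve. Since $\gamma'$ lies in the interior of a vertically invariant neighborhood of the boundary, it descends to a Legendrian knot in the interior of $s(\widetilde{u}(M))$ (Legendrian by the Legendrian Realization Principle), so Proposition \ref{prop:handle_surgery} furnishes a well-defined map $\mathbf{F}_{\gamma'}$ from $\ech(s(\widetilde{u}(M)),\Gamma',s(\widetilde{\alpha}))$ to the sutured ECH of the result of contact $(+1)$-surgery along $\gamma'$. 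It then remains to identify this target with $\ech(s(\widetilde{u}(M)_{2,1}),\Gamma'',s(\widetilde{\alpha}_{2,1}))$. For this I would appeal to the contactomorphism $\mathbf{BS}$ of Proposition \ref{prop:BS}, which identifies the surgered manifold $\widetilde{u}(M)'$ with the 2-handle-plus-auxiliary-1-handle attachment $\widetilde{u}(M)_{2,1}$ and restricts to the identity away from a vertically invariant neighborhood of $\gamma$. Because $\mathbf{BS}$ is the identity outside this neighborhood, it is compatible with the suturing process, which is localized near the dividing set; suturing both sides and arguing exactly as in the proof of Corollary \ref{cor:s(u(M))=M} (via Theorem \ref{thm:suturing_unsutured} and the invariance of sutured ECH under a contactomorphism fixing a collar of the dividing set) yields a canonical isomorphism between $\ech$ of $s(\widetilde{u}(M)')$ and $\ech(s(\widetilde{u}(M)_{2,1}),\Gamma'',s(\widetilde{\alpha}_{2,1}))$. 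Composing the three maps defines $\underline{F}_{h_2}$.

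The main obstacle, I expect, is the bookkeeping needed to confirm that these three localized operations interact cleanly: unsuturing and suturing near the dividing set (Corollary \ref{cor:s(u(M))=M}), contact surgery along the interior knot $\gamma'$ (Proposition \ref{prop:handle_surgery}), and the $\mathbf{BS}$ contactomorphism (Proposition \ref{prop:BS}). One must check that the surgery locus and the suturing region are disjoint, so that contact surgery in the interior commutes with passing between the smooth convex and sutured boundary conditions, and that the two sutures $\Gamma'$ and $\Gamma''$ are correctly matched under $\mathbf{BS}$, which alters the dividing set as the 2-handle is attached. Well-definedness of the composite ultimately rests on the well-definedness of $\mathbf{F}_{\gamma'}$ itself, whose independence of the auxiliary choices entering the cobordism construction is deferred to Section \ref{sec:proof33}.
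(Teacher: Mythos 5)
Your proposal matches the paper's proof: the paper defines $\underline{F}_{h_2}$ as the composition of the isomorphism of Corollary \ref{cor:s(u(M))=M}, the surgery map $\mathbf{F}_{\gamma'}$ of Proposition \ref{prop:handle_surgery} applied with $K=\gamma'$, and the identification via $\mathbf{BS}^{-1}$ of Proposition \ref{prop:BS}, exactly as you describe. The only cosmetic difference is that the paper isolates your ``bookkeeping'' concern as an explicit intermediate isomorphism $\ech(s(\widetilde{u}(M)'),\Gamma',s(\widetilde{\alpha}_{\gamma'}))\simeq\ech(s(\widetilde{u}(M)'),\Gamma',s(\widetilde{\alpha}'))$, justified, as you anticipate, by requiring the suturing operation to take place in the complement of the standard neighborhood of $\gamma'$, so that the contactomorphism reconciling the two contact forms is the identity near the boundary.
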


\begin{proof} The map $\underline{F}_{h_2}$ arises as the composition of the maps 
\begin{eqnarray}
    \label{eq:prop26}\ech(M,\Gamma,\alpha)&\xrightarrow{\simeq}&\ech(s(\widetilde{u}(M)),\Gamma',s(\widetilde{\alpha}))\\
    \label{eq:prop32}\ech(s(\widetilde{u}(M)),\Gamma',s(\widetilde{\alpha}))&\xrightarrow{\mathbf{F}_{\gamma'}}&\ech(s(\widetilde{u}(M)'),\Gamma',s(\widetilde{\alpha}_{\gamma'}))\\
    \label{eq:confscaling}\ech(s(\widetilde{u}(M)'),\Gamma',s(\widetilde{\alpha}_{\gamma'}))&\xrightarrow{\simeq}&\ech (s(\widetilde{u}(M)'),\Gamma',s(\widetilde{\alpha}'))\\
    \label{eq:prop31}\ech (s(\widetilde{u}(M)'),\Gamma',s(\widetilde{\alpha}'))&\xrightarrow[\mathbf{BS}^{-1}]{\simeq}&\ech(s(\widetilde{u}(M)_{2,1}),\Gamma'',s(\widetilde{\alpha}_{2,1}))
\end{eqnarray}
    The isomorphism in \eqref{eq:prop26} is supplied by Proposition \ref{cor:s(u(M))=M}, while the isomorphism in \eqref{eq:prop31} is provided by Proposition \ref{prop:BS}. Note that each of these isomorphisms are induced by chain maps that send the empty orbit set to the empty orbit set. The map $\mathbf{F}_{\gamma'}$ in \eqref{eq:prop32} is supplied by Proposition \ref{prop:handle_surgery}, and by \cite[Theorem 1.9(ii)]{ht0} along with the subsequent Remark 1.11 it sends the ECH contact class to the ECH contact class. As for the isomorphism in \eqref{eq:confscaling}, it follows from the proofs of Propositions \ref{prop:BS} and \ref{prop:handle_surgery} that $\widetilde{\alpha}_{\gamma'}=\widetilde{\alpha}'$ in the complement of a standard neighborhood of $\gamma'$ in the interior of $\widetilde{u}(M)$ and that
    $(\widetilde{u}(M)',\textrm{ker}(\widetilde{\alpha}_{\gamma'}))$ and $(\widetilde{u}(M)',\textrm{ker}(\widetilde{\alpha}'))$ are contactomorphic via a contactomorphism that is equal to the identity in the complement of that neighborhood. By requiring that the suturing operation as described in Section \ref{ss:sutured-convex} take place in the complement of the standard neighborhood of $\gamma'$, we guarantee that the sutured contact 3-manifolds $(s(\widetilde{u}(M)'),\Gamma',s(\widetilde{\alpha}_{\gamma'}))$ and $(s(\widetilde{u}(M)'),\Gamma',s(\widetilde{\alpha}'))$ are contactomorphic via a contactomorphism that is equal to the identity in the complement of the standard neighborhood of $\gamma'$, which contains a collar neighborhood of $\partial s(\widetilde{u}(M)')$. The isomorphism in \eqref{eq:confscaling} is induced by this contactomorphism, which of course maps the empty orbit set to the empty set.
\end{proof}
In the next section, we show that there exists an isomorphism
\begin{equation}
\label{eqn:suturevs1handle}
\ech\big(s(\widetilde{u}(M)_{2,1}),\Gamma'',s(\widetilde{\alpha}_{2,1}) \big)
\xrightarrow{\sim}
\ech\big( s(\widetilde{u}(M) _ 2),\Gamma', s(\widetilde{\alpha} _ 2)\big)
\end{equation}
which when composed with the map in Theorem \ref{thm:pre_2_handle_map} gives the desired contact 2-handle attachment map in Theorem \ref{thm:main}.

\section{Suturing and contact 1-handle attachment commute}
\label{sec:suturingvshandle}
By \cite[Lemma 3.1]{kst}, $\ech(s(\widetilde{u}(M) _ 2) , \Gamma', s (\widetilde{\alpha} _ 2))$ is canonically isomorphic to $\ech(s(\widetilde{u}(M) _ 2) _ 1,\Gamma', s (\widetilde{\alpha} _ 2) _ 1)$ where $(s(\widetilde{u}(M) _ 2) _ 1,\Gamma', s (\widetilde{\alpha} _ 2) _ 1)$ is the sutured contact 3-manifold obtained from $(s(\widetilde{u}(M) _ 2) , \Gamma', s (\widetilde{\alpha} _ 2))$ by attaching a contact 1-handle as described in \cite[Section 2.2]{kst}. Thus, to show the existence of the isomorphism \eqref{eqn:suturevs1handle} it suffices to prove the following.

\begin{lemma}
There exists an isomorphism 
\[\ech \big( s(\widetilde{u}(M)_{2,1}) , s(\widetilde{\alpha }_{2,1}) \big) \xrightarrow{\sim} 
\ech \big( s(\widetilde{u}(M)_2)_1 , s(\widetilde{\alpha}_2)_1 \big),\] i.e the suturing operation commutes with the contact 1-handle attachment.
\end{lemma}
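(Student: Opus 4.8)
The plan is to produce a contactomorphism
\[
\Psi\colon s(\widetilde{u}(M)_{2,1}) \longrightarrow s(\widetilde{u}(M)_2)_1
\]
that intertwines the two contact forms, identifies the two sutures, and restricts to the identity away from a neighborhood of the belt sphere of the 2-handle. Such a $\Psi$ then induces a chain-level isomorphism on sutured ECH sending the empty orbit set to the empty orbit set, which is exactly the asserted isomorphism. The heuristic behind the existence of $\Psi$ is that the contact 1-handle attachment is supported in a standard handle neighborhood $N_1\cong D^1\times D^2$ around the belt sphere, while the suturing operation of Section \ref{ss:sutured-convex} only alters the contact form within the half-disk $U$ of \eqref{suturing_half_disk} along the dividing set; the content of the lemma is that carrying out these two operations in either order produces the same local contact model on $N_1$ together with its collar.

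First I would fix the handle model and the relevant normal forms. On the right-hand side one first sutures $\widetilde{u}(M)_2$ and then attaches the sutured contact 1-handle of \cite[Section 2.2]{kst} along the now-sutured belt-sphere locus, whereas on the left-hand side one attaches the Giroux-style contact 1-handle on the convex boundary of $\widetilde{u}(M)_2$ and only afterward sutures the resulting convex manifold. In both cases the attaching region straddles the dividing set, so I would write down the explicit contact forms of the two handle models---the sutured 1-handle form of \cite{kst} and the suturing normal form $h_0(u\,ds+\beta_0)$ produced in Steps \eqref{step:(5)_suturing}--\eqref{step:(6)_suturing}---and verify that they agree on $N_1$ up to a strict contactomorphism fixing the collar of $\partial N_1$. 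Away from $N_1$ both manifolds coincide with $s(\widetilde{u}(M)_2)$, where the identification is the identity, so $\Psi$ is assembled by gluing the local contactomorphism on $N_1$ to the identity on its complement.

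To conclude that $\Psi$ induces an isomorphism on sutured ECH, I would argue as in the proof of Theorem \ref{thm:suturing_unsutured} and in \cite[Lemma 3.1]{kst}. Because the Reeb vector field of the sutured form is positively transverse to the $\varphi$-constant (resp.\ handle-core) hyperplanes throughout $N_1$, no closed Reeb orbit enters $N_1$, so the generators of the two chain complexes are literally the same orbit sets in the common bulk. An intersection-theoretic argument with a suitable null-homologous $3$-chain $A = id\times\widetilde{P}$, exactly as in the proof of Theorem \ref{thm:suturing_unsutured}, then rules out any ECH-index-$1$ holomorphic curve contributing to the differential whose projection meets $N_1$, so the two differentials agree and $\Psi_\ast$ is a chain isomorphism.

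The main obstacle I expect is the interface analysis of the preceding two paragraphs: since the attaching curve $\gamma$ meets the dividing set, the belt sphere---and hence the feet of the 1-handle---sits precisely where suturing is active, so the two local normal forms must be matched carefully rather than being trivially disjoint. Controlling this matching, and in particular checking that the characteristic foliation arranged in Section \ref{ss:contact_2-handle} is preserved while the Reeb flow remains transverse to the level sets used in the intersection argument, is the crux of the proof; everything else is a routine assembly built on the isomorphisms already established in Theorem \ref{thm:suturing_unsutured} and Corollary \ref{cor:s(u(M))=M}.
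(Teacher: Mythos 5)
Your overall skeleton (identity away from the handle region, local comparison near it, then conclude on $\ech$) matches the paper's strategy in spirit, but the pivotal middle step hides a genuine gap: you assert that the two handle models ``agree on $N_1$ up to a strict contactomorphism fixing the collar of $\partial N_1$,'' and this is exactly what one cannot expect to verify. A strict contactomorphism must intertwine Reeb vector fields, and the two contact forms near the handle arise from entirely different constructions (Giroux's convex-boundary 1-handle followed by the six-step suturing of Section \ref{ss:sutured-convex}, versus the sutured 1-handle of \cite{kst} attached after suturing); no normal-form lemma equates them strictly with the boundary collar frozen. The paper's proof is structured precisely to avoid this rigidity: it first observes (Step 1) that the two suturing operations literally produce the same complement of $[-1,0]_s\times D\cup h_1$, then (Step 2) extends the identity over boundary collars by matching a specific strict contact submanifold $E$ that both suturings produce identically, then (Step 3) must still \emph{correct} the Liouville structures on the horizontal boundaries via a neck extension and an isotopy as in \cite[Lemma 2.9]{kst} --- already showing that even on the boundary the identification is not free --- and finally fills in the remaining region with a tight $3$-ball $B$ with convex boundary and connected dividing set, invoking uniqueness of the tight contact structure there. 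That last extension is a contactomorphism but \emph{not} a strict one, so the induced map on sutured $\ech$ comes from the invariance of sutured ECH under contactomorphisms (as established in \cite{kst} and \cite{cgh-openbook}), not from a chain-level identification.

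This also makes your third paragraph internally inconsistent: if the global strict contactomorphism $\Psi$ you posit existed, it would induce a chain isomorphism outright and the orbit-counting and intersection-theoretic argument (the $3$-chain $A=id\times\widetilde{P}$, which the paper uses only for Theorem \ref{thm:suturing_unsutured}) would be superfluous; conversely, without strictness the claim that ``the generators are literally the same orbit sets'' fails inside the handle region, and transversality of the Reeb field to $\varphi$-constant slices is a property of the suturing half-disk $U$ of \eqref{suturing_half_disk}, not something you have verified for the competing handle models. To repair the proposal you would need to replace the strict local matching by the paper's three-step scheme: identify the common complements, extend over collars after adjusting the Liouville forms, and cap off with the tight-ball uniqueness argument, accepting a non-strict contactomorphism and quoting contactomorphism invariance of sutured $\ech$ to conclude.
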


\begin{proof}
Our proof has three steps.\\
\noindent\textit{Step 1.} Note that the annulus $A(\gamma) \subset \widetilde{u}(M)$ in the proof of Proposition \ref{prop:BS} becomes a disjoint union of two disks after attaching the contact 2-handle. We may assume that 
\begin{figure}[ht]
\includegraphics[width=12cm]{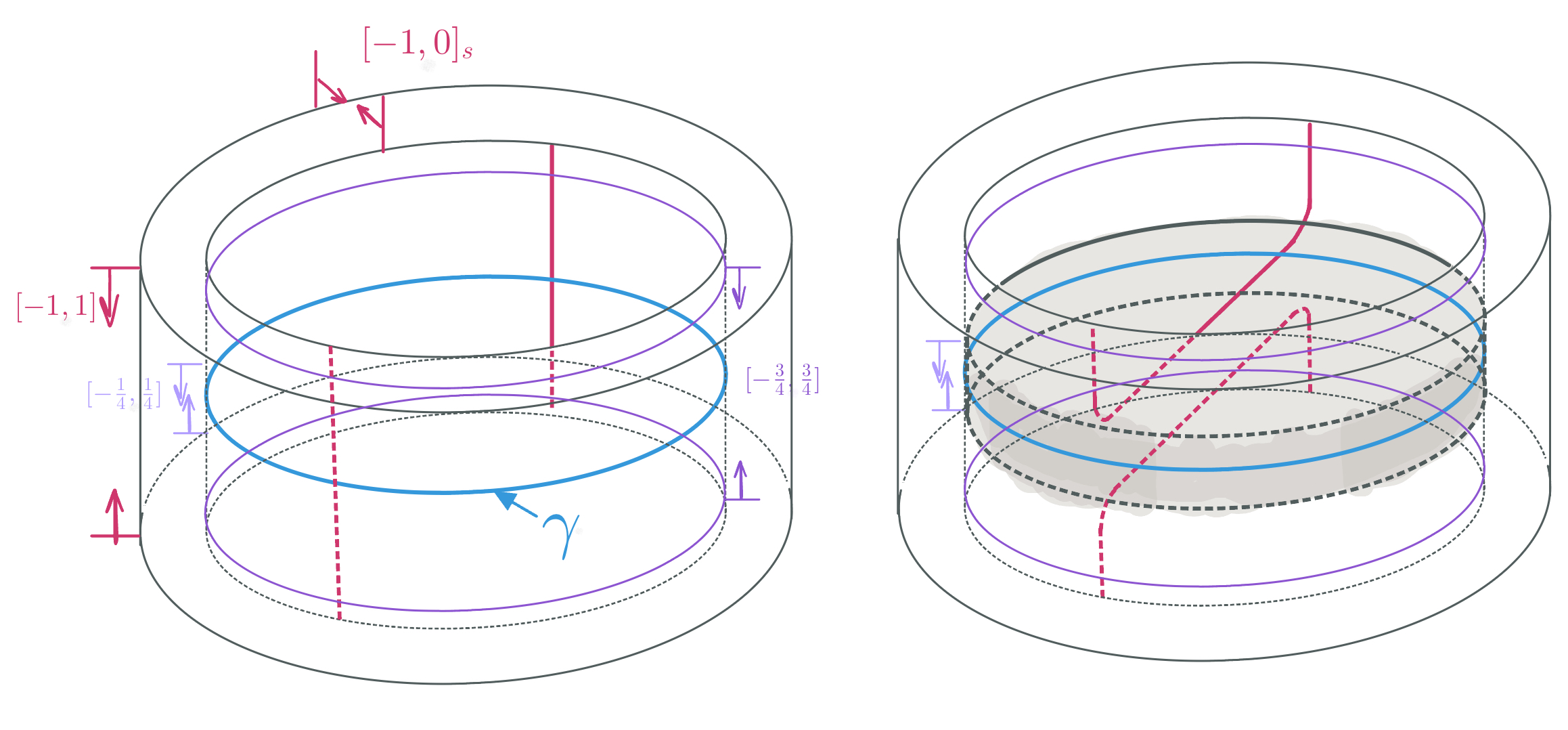}
\caption{Left: $[-1 , 0] _ s \times A(\gamma)  \simeq [-1 , 0] _ s \times [-1 , 1] \times \gamma $ in which the red arcs denote the part of the dividing curve on $\widetilde{u}(M)$. Right: After the $2$-handle is attached along $[-\tfrac{1}{4} , \tfrac{1}{4}] \times \gamma$ the part of $\partial \widetilde{u}(M) _ 2$ in this local picture topologically is a disjoint union of two disks which we call $D$ and $D'$ refers to the complement of 
$\{ 0 \} _ s \times ([ -1 , -\tfrac{3}{4}) \cup ( \tfrac{3}{4} , 1] ) \times \gamma$ in $D$.
} 
\label{fig:D_prime_and_D}
\centering
\end{figure}    \begin{itemize}\leftskip-0.25in
        \item the 2-handle is attached along the part of $A (\gamma)$ parametrized by 
        \[
        [-\frac{1}{4} , \frac{1}{4}] \times \gamma,
        \]
        \item the isotopy supported in a neighborhood of $A _ 1 \setminus A (\gamma)$ used in the construction of the map $\mathbf{BS}$ from Proposition \ref{prop:BS} is the identity on 
        \[
        \overline{\widetilde{u}(M) _ 2 \setminus [-\frac{1}{2} , 0] _ s \times D '}
        \]
        where $D' \subset \partial \widetilde{u}(M) _ 2$ denotes the disjoint union of the two disks whose boundaries are identified with $\{ 0 \} _ s \times \{ \pm \frac{3}{4}\} \times \gamma$, see Figure \ref{fig:D_prime_and_D}.
    \end{itemize}
Let $D$ denote an analogous version of the above $D'$ with $\{ \pm \frac{3}{4}\}$ replaced by $\{ \pm 1 \}$ then 
\[
s (\widetilde{u}(M) _ {2,1} \setminus \big( [-1 , 0] _ s \times D \cup h _ 1 \big)) 
=
s (\widetilde{u}(M) _ 2) _ 1 \setminus \big( [-1 , 0 ] _ s \times D \cup H _ 1 \big)
\]
and the identity map is a strict contactomorphism in which 
\begin{itemize}\leftskip-0.25in
    \item $h _ 1$ denotes the auxiliary 1-handle attached on $\widetilde{u}(M)_2$ with the contact form that is the restriction of $\widetilde{\alpha}_{2,1}$ to $h_1$,
    \item $H _ 1$ denotes the auxiliary 1-handle attached on $s(\widetilde{u}(M)_2)$ with the contact form that is the restriction of $s(\widetilde{\alpha}_2)_1$ to $H_1$.
\end{itemize}
To see this, it suffices to note that the two suturing operations, one for $\widetilde{u}(M)  _ {2,1}$ and the other for $\widetilde{u}(M)_2$, produce the same complement in $\widetilde{u}(M) _ 2 \setminus \big( [-1 , 0] _ s \times D \big) $, the latter being contained in both $\widetilde{u}(M) _ {2,1}$ and $\widetilde{u}(M) _ 2$. Indeed, $\widetilde{u}(M) _ 2 \setminus \big( [-1 , 0] _ s \times D \big) $ is contained in the open neighborhood 
$\widetilde{u}(M) _ 2 \setminus \big( [-\frac{1}{2} , 0] _ s \times D '\big)$ over which $\widetilde{\alpha} _ {2,1}$ and $\widetilde{\alpha} _ 2$ coincide.\\  

\noindent\textit{Step 2.} Next, we extend the identity map in the previous step to collar neighborhoods of the boundaries of $s(\widetilde{u}(M)_{2,1})$ and $s (\widetilde{u}(M)_2)_1$  by first fixing diffeomorphisms between the boundaries
\begin{align*}
\varphi ^ {\pm} &:  {\partial _ h }^ \pm s(\widetilde{u}(M) _ {2,1}) \rightarrow {\partial _ h }^ \pm s (\widetilde{u}(M) _ 2) _ 1\\
\varphi ^ {v} &:  \partial _ {v} s(\widetilde{u}(M) _ {2,1}) \rightarrow \partial _ {v} s (\widetilde{u}(M) _ 2) _ 1
\end{align*}
and then using the sutured contact manifold structure, \eqref{alpha_over_vert} and \eqref{alpha_over_pm}. The two suturing operations performed on 
$(\widetilde{u}(M) _ 2 , \widetilde{\alpha} _ 2)$ and $(\widetilde{u}(M) _ {2,1} , \widetilde{\alpha} _ {2,1})$ would produce an identical strict contact submanifold $E$ as depicted in Figure \ref{fig:chunk_E} and is embedded in the interior of 
$[-1 , 0] _ s \times ([-1 , -\frac{3}{4}] _ z \cup [\frac{3}{4} , 1] _ z) \times \gamma$ when the latter is regarded as both contained in $(\widetilde{u}(M) _ 2 , \widetilde{\alpha} _ 2)$ and $(\widetilde{u}(M) _ {2,1} , \widetilde{\alpha} _ {2,1})$. 

To be more precise, we describe $E$ as follows. First denote the portion of the dividing curve whose $z$-coordinate is between $-1$ to $-\frac{3}{4} - \kappa$ or $\frac{3}{4} + \kappa$ to $1$ where $0 < \kappa < \frac{1}{100}$ and are referred to as $I = I _ 0 \cup I _ 1$ and $J = J _ 0 \cup J _ 1$ respectively, see Figure \ref{fig:I_and_J}. Notice that both $I$ and $J$ have two components; each corresponds to one of the two intersections of $\gamma$ and $\Gamma$. Then use the flow of $\partial _ u$ as in the Step \eqref{step:(5)_suturing} of suturing to sweep $I$ and $J$ within a suitable amount of time say $\delta > 0$ and whose image is exactly $U \cap \{ s =0 \}$ where $U$ is introduced in Step \eqref{suturing_half_disk}. The time-$\pm \frac{\varepsilon}{2}$ images of both $I$ and $J$, denoted by $I ^ + = I _ 0 ^ + \cup I _ 1 ^ +$, $J ^ + = J _ 0 ^ + \cup J _ 1 ^ +$ and $I ^ - = I _ 0 ^ - \cup I _ 1 ^ -$, $J ^ -= J _ 0 ^ - \cup J _ 1 ^ -$ respectively would be part of the boundary circles of the plus/minus horizontal boundaries 
    ${\partial _ h }^ \pm s(\widetilde{u}(M) _ {2,1})$.

The entire $E$ can then be constructed by taking the collar neighborhood of the part of it that belongs to $\partial s(\widetilde{u}(M) _ {2,1})$ 
using the suture manifold structure \eqref{alpha_over_vert} and \eqref{alpha_over_pm}. 
Notice that as depicted in the Figure \ref{fig:chunk_E}, $E$ has two components each of which corresponds to $I$ or $J$. The part for $I$ alluded to consists of 
(i) $r = \frac{\varepsilon}{2}$ level set within $U \cap (I \times [-\delta , \delta] _ u \times [-1 , 0]_s)$, (ii) a rectangle embedded in ${\partial _ h} ^ + s(\widetilde{u}(M) _ {2,1}) \setminus [-\tfrac{3}{4} , \tfrac{3}{4}] \times \gamma $ with $I _ 0 ^ +$ and $I _ 1 ^ +$ as its opposite sides and another rectangle embedded in ${\partial _ h} ^ - s(\widetilde{u}(M) _ {2,1}) \setminus [-\tfrac{3}{4} , \tfrac{3}{4}] \times \gamma $ with $I _ 0 ^ -$ and $I _ 1 ^ -$ as its opposite sides see the red-shaded regions at the upper level in Figure \ref{fig:I_and_J}. Notice an analogous statement holds for $J$-part also and which is illustrated analogously as the red-shaded regions at the bottom level.  
    
Finally, the above overall construction works for $\partial s (\widetilde{u}(M) _ 2) _ 1$ and produces the same strict contact submanifold $E$ since all the constructions are staying away from the region where the auxiliary 1-handle is attached in either fashion: before or after suturing.
\begin{figure}[ht]
\includegraphics[width=5cm]{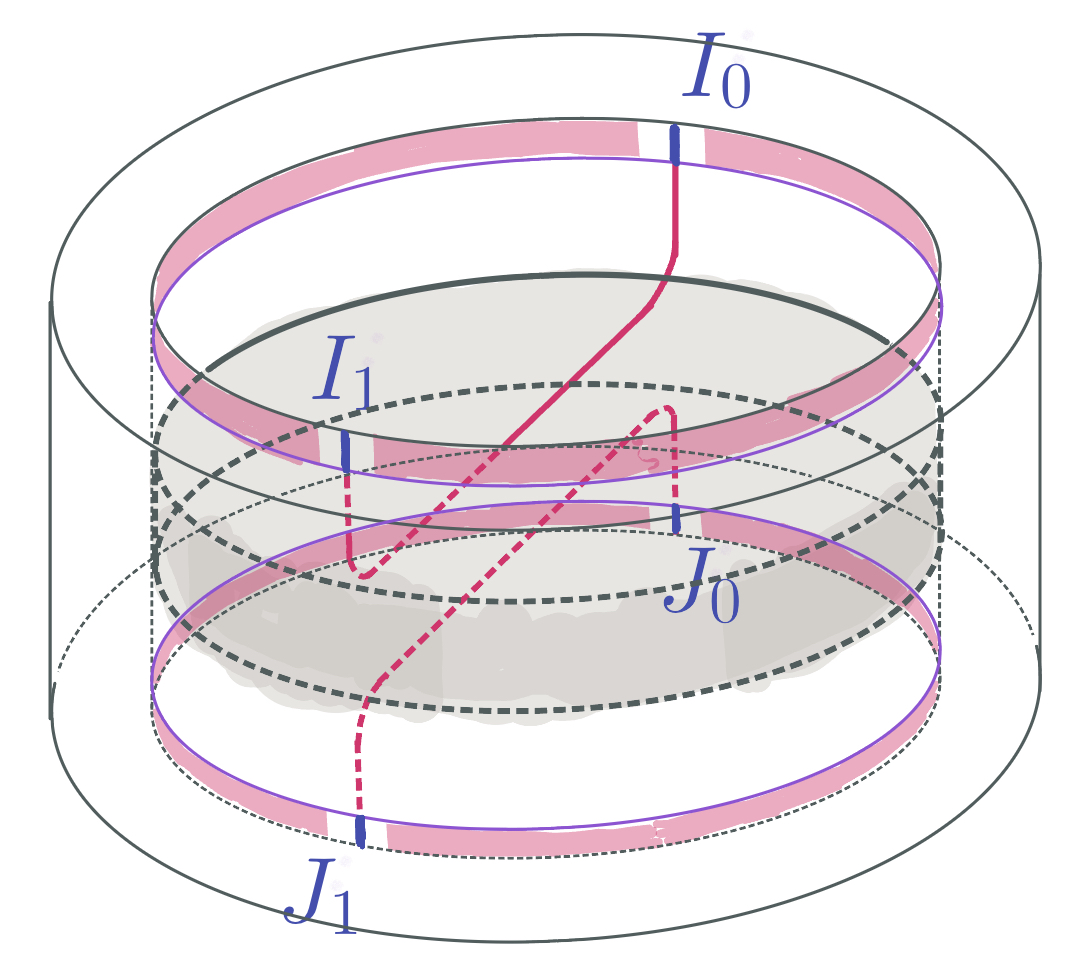}
\caption{}
\label{fig:I_and_J}
\centering
\end{figure}

Denote $\partial E\,\cap\, {\partial _ h }^ \pm s (\widetilde{u}(M) _ {2,1})$ by ${\partial _ h} ^ \pm E$ and $\partial E\,\cap\,\partial _ v s(\widetilde{u}(M) _ {2,1})$ by $\partial _ v E$. Then there are collections of properly embedded arcs $a _ {\pm} \subset {\partial _ h}^{\pm} E$ and $a _ v \subset \partial _ v E$ as portrayed by the purple arcs in Figure \ref{fig:chunk_E} that partition ${\partial _ h }^ \pm s(\widetilde{u}(M) _ {2,1})$ and $\partial _ v s(\widetilde{u}(M) _ {2,1})$ (resp. those for $s(\widetilde{u}(M) _ 2) _ 1$) into subsurfaces. We extend $\varphi ^ { \pm}$ and $\varphi ^ {v}$ so as to 
\begin{itemize}\leftskip-0.25in
    \item be equal to the identity over the subsurfaces that contain 
    ${\partial _ h }^ \pm s (\widetilde{u}(M) _ {2,1}) \setminus (D \times [-\delta , 0] _ s \cup h _ 1)$ or 
    $\partial _ v s(\widetilde{u}(M) _ {2,1}) \setminus (D \times [-\delta , 0] _ s \cup h _ 1)$, and
    
    \item satisfy $( \varphi ^ {\pm} ) _ {*} \partial _ {\tau} = \partial _ \tau $ and $(\varphi ^ {v}) _ {*} \partial _ t = \partial _ t$.
\end{itemize}
\begin{figure}[ht]
\includegraphics[width=5cm]{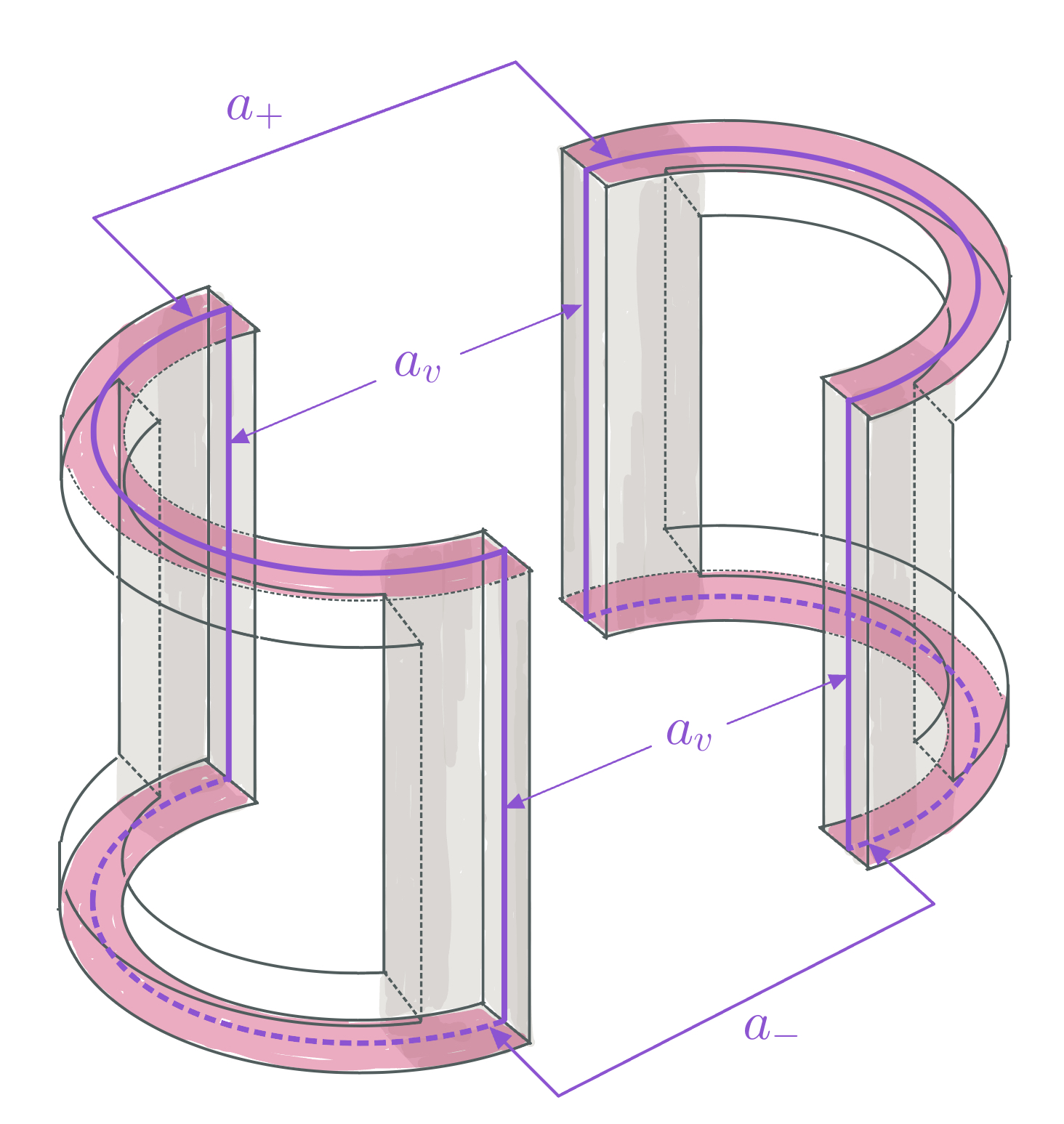}
\caption{The upper red-shaded region denotes ${\partial _ h} ^ + E$ while the bottom red-shaded region denotes ${\partial _ h }^ - E$ upon which lie $a _ +$ and $a _ -$, respectively. The gray-shaded region denotes a collar neighborhood (in the $\tau$-direction) of $\partial _ v E$ upon which lies the collection of four arcs denoted by $a _ v$.}
\label{fig:chunk_E}
\centering
\end{figure}
At this stage, we have arrived at a partially-defined diffeomorphism
\[
f : s (\widetilde{u}(M)_{2,1})
\dashrightarrow 
s (\widetilde{u}(M)_2)_1 
\]
that is actually a strict contactomorphism if $\varphi ^ {\pm}$ intertwine the Liouville structures on ${\partial _ h } ^ {\pm} s (\widetilde{u}(M) _ {2,1})$ and ${\partial _ h} ^ {\pm}s (\widetilde{u}(M) _ 2) _ 1$, respectively. Assuming this for the moment, we may choose a tight 3-ball $B$ with convex boundary in the interior of $s (\widetilde{u}(M) _ {2,1})$ that contains the subset where $f$ is not defined. Then uniqueness of the tight contact structure on the with convex boundary and connected dividing set can be used to extend $f$.\\ 

\noindent\textit{Step 3.} The diffeomorphisms ${\varphi}^ {\pm}$ may \emph{a priori} fail to intertwine the Liouville structures over the subsurfaces where they are not equal to the identity map, i.e. those not containing ${\partial _ h } ^ {\pm} s (\widetilde{u}(M) _ {2,1}) \setminus (D \times [-\delta , 0] _ s \cup h _ 1)$. To remedy this, let $(\varphi ^ {\pm}) _ {*} (\beta _ {0} ^ {\pm}) = e ^ \tau \theta _ 0$ where $\beta _ {0} ^ {\pm}$ denotes the Liouville form obtained from the restriction of $s (\widetilde{\alpha} _ {2,1})$ to ${\partial _ h }^ {\pm} s (\widetilde{u}(M) _ {2,1})$ and $\theta _ 0$ denotes a volume form on $\Gamma \subset s (\widetilde{u}(M) _ 2) _ 1$ and analogously let $e^{\tau} \theta _ 1$ denote the restriction of $s (\widetilde{\alpha} _ 2) _ 1$ to ${\partial _ h }^ {\pm} s (\widetilde{u}(M) _ 2) _ 1$. Interpolate between $e ^ {\tau} \theta _ 0$ and $e^ {\tau} \theta _ 1$ via a family $e ^ \tau \theta _ \tau$ supported over the region 
$\Gamma \times (-1 , T] _ {\tau} \times [-1 ,1] _ t \subset s (\widetilde{u}(M) _ 2) _ 1$ where $T > 0$ is sufficiently large and we $\dot{\theta}_\tau \ll 0$ so that $Cdt + e ^ \tau \theta _ \tau$ still defines a contact form. Since sutured $\ech$ of 
\[
\big( s(\widetilde{u}(M) _ 2) _ 1 \cup \Gamma \times (-1 , T] _ {\tau} \times [-1 ,1] _ t , 
s(\widetilde{\alpha} _ 2) _ 1 \cup (Cdt + e^ {\tau } \theta _ \tau)
\big)
\]
is canonically isomorphic (by proof of Theorem \ref{thm:suturing_unsutured}) to that without such extension we may assume, without loss of generality, that $(\varphi ^ {\pm}) _ {*} (\beta _ 0 ^ \pm) = \beta _ 1 ^ \pm$ over the vertical neighborhood of the suture of $\Gamma'$ of $s (\widetilde{u}(M) _ 2) _ 1$. 

Finally, to attain $(\varphi ^ {\pm})_{*} (\beta _ 0 ^ {\pm}) = \beta _ 1 ^ {\pm}$ where $\beta _ {1} ^ {\pm}$ denotes the restriction of $s (\widetilde{\alpha} _ 2) _ 1$ to ${\partial _ h } ^{\pm} s (\widetilde{u}(M) _ 2) _ 1$, we adopt the strategy in \cite[Lemma 2.9]{kst} to find an isotopy $\phi _ {\nu}$ compactly supported near ${\partial _ h  }^{\pm} s (\widetilde{u}(M)_2) _ 1$ so that
\begin{itemize}\leftskip-0.25in
    \item $\phi _ {\nu} =id$ whenever 
    $\beta _ {0} ^ {\pm} = \beta _ {1} ^{\pm}$,
    \item $\phi _ 1$ perturbs $s (\widetilde{u}(M) _ 2) _ 1$ within the vertical $t$-extension:
        \[
           ( {\partial _ h }^ + s (\widetilde{u}(M) _ 2) _ 1 ) \times (1 - \varepsilon , \infty) _ {t} 
           \cup
             ( {\partial _ h }^{-} s (\widetilde{u}(M) _ 2) _ 1 ) \times (-\infty , -1 + \varepsilon) _ {t} 
        \]
        to a new sutured contact manifold denoted by $\phi _ 1 s (\widetilde{u}(M) _ 2 ) _ 1$ whose sutured ECH is canonically isomorphic to the original one,
    \item 
    denoting by ${\beta' _ 1 }^ {\pm}$ the restriction of $s( \widetilde{\alpha} _ 2) _ 1$ to the ${\partial _ h} ^ {\pm}$-boundary of $\phi _ 1 s (\widetilde{u}(M) _ 2 ) _ 1$, we have
    \[
   (\phi _ 1 \circ \varphi ^ {\pm}) _ {*} 
   (\beta _ 0 ^ {\pm}) = {\beta' _ 1 }^{\pm}.
    \]
\end{itemize}
As a result, we may as well have assumed $(\varphi ^ {\pm}) _ {*} (\beta _ {0} ^ {\pm}) = \beta _ {1} ^ {\pm}$. 
\end{proof}

\section{Proof of Proposition \ref{prop:handle_surgery}}
\label{sec:proof33}
This section completes the proof of Proposition \ref{prop:handle_surgery}. Our proof has four steps.\\

\noindent\textit{Step 0.} Let $(M , \Gamma , \alpha)$ be a sutured contact manifold with a chosen tailored almost complex structure $J$. For simplicity, suppose that $\Gamma$ is connected and that there is a diffeomorphism $f : R _ + (\Gamma) \rightarrow R _ - (\Gamma) $ so that $f ^ {\big.*} \beta _ + = \beta _ -$. (If not, we can add sufficiently many contact 1-handles away from $\gamma\cap \Gamma$ so as to make $\Gamma$ and keep sutured ECH unchanged.) Then by \cite[Chapter 3]{kst} there is a closed contact manifold $(Y _ n , \alpha _ n)$ that contains $(M , \alpha)$ as a contact submanifold for each $n>0$. Having fixed a symplectization admissible almost complex structure on $\mathbb{R}\times Y_n$ that agrees with $J$ on $\mathbb{R}\times M$, there is a canonical isomorphism
\[
\Phi _ n ^ {L , J} : 
\ech ^ L (M , \Gamma , \alpha , J)
\rightarrow
\ech ^ L (Y _ n , \alpha _ n)
\]
for each $n$ sufficiently large with respect to $L$ (see \cite[Lemma 3.5]{kst} and equation (3.3) therein.) To be brief, the closed manifold $(Y _ n , \alpha _ n)$ indexed by $n$ is obtained from $(M , \Gamma , \alpha)$ by gluing $R _ + (\Gamma)$ to $R _ - (\Gamma)$ via the diffeomorphism $f$ while adding in the `neck' $R _ + (\Gamma) \times [-n , n] _ t$ thus forming a pre-Lagrangian torus boundary \cite[Proposition 2.11]{kst} along which a contact solid torus $\simeq D _ {2 + 2n} \times \Gamma$ is attached in a manner that swaps the meridian and the longitude where $D _ {2 + 2n} = \{ \textbf{x} \in \mathbb{R}^2 :  |\textbf{x}| \leq 2 + 2n\}$. In what follows, we shall refer to the closed contact manifold $(Y _ n, \alpha _ n)$ as the contact closure of $(M,\Gamma,\alpha)$ with neck-length $n$.

The maps $\Phi _ n ^ {L ,J}$'s enjoy the following properties. First, they intertwine inclusion induced maps
            \begin{equation}
            \label{eq:inclusion_plus_J}
            \begin{tikzcd}
                \ech ^ L (M , \Gamma , \alpha , J)
                \rar["\Phi _ n ^ {L , J}"] 
                \dar["i ^ {L , L'} _ J"] 
                & 
                \ech ^ L (Y _ n , \alpha _ n)
                \dar["i ^ {L , L'}"]
                \\
                \ech ^ {L'} (M , \Gamma , \alpha , J) 
                \rar["\Phi _ n ^ {L' , J}"] 
                & 
                \ech ^ {L'} (Y _ n , \alpha _ n) 
            \end{tikzcd}
            \end{equation}
for each $n$ sufficiently large with respect to both $L$ and $L'$ when $L<L'$. Next, notice that for $L>0$ and $n,m>0$ with $n<m$ both sufficiently large with respect to $L$, it follows from \cite[Proposition 3.8]{kst} that there exists an isomorphism 
\begin{equation}
\label{n_to_m_iso_closure}
\Psi ^ L _ {n, m} :
\ech ^ L (Y _ n , \alpha _ n) 
\rightarrow
\ech ^ L (Y _ m , \alpha _ m)
\end{equation}
such that
\[
\Psi ^ L _ {n,m}\circ \Phi _ n ^ {L ,J}
=
\Phi _ m ^ {L , J},
\]
which holds uniformly for different choices of the tailored almost complex structure $J$.\\

\noindent\textit{Step 1.} With the above understood, we construct the map $F ^ L _ {K , J _ 0 J _ 1}$ by making use of the isomorphisms $\Phi _ n ^ {L , J}$ and the $\ech$ cobordism map \cite[Theorem 1.9]{ht2} for closed contact 3-manifolds where $J _ 0$ denotes a tailored almost complex structure for $(M , \Gamma , \alpha)$ while $J _ 1$ denotes a tailored almost complex structure for $(M' , \Gamma , \alpha ')$, the latter being obtained by contact $(+1)$-surgery along $K$ as in Proposition \ref{prop:handle_surgery}.

Indeed, the embedding data used to construct the contact closure $(Y _ n , \alpha _ n)$ for $(M , \Gamma , \alpha)$ can also be used for the surgered manifold $(M' , \Gamma , \alpha')$ as well as for the surgery cobordism -- simply by attaching the symplectic 2-handle on the product $[0,1] _ s \times Y _ n$ since the surgery takes place in the interior of $M$. Denoting the former by $(Y _ n ' , \alpha _ n ')$ and the latter by $(X _ n , \omega _ n)$, the map $F ^ L _ {K , J _ 0 J _ 1}$ is defined in such a way as to make the following diagram commute.
    \[
    \begin{tikzcd}
        \ech ^ L (M , \Gamma , e \alpha , J _ 0 ) 
        \rar["\Phi _ n ^ {L , J _ 0}"]
        \dar[dashed, "F ^ L _ {K , J _ 0 J _ 1}"]
        & 
        \ech ^ L (Y _ n , e \alpha _ n) 
        \dar["\Phi ^ L (X _ n )"]
        \\
        \ech ^ L (M' , \Gamma , \alpha ' , J _ 1 ) 
        \rar[" \Phi _ n ^ {L , J _ 1}"]
        & 
        \ech ^ L (Y _ n ' , \alpha _ n ')
    \end{tikzcd}
    \]
Here $\Phi ^ L (X _ n)$ denotes the $\ech$ cobordism map induced by the exact symplectic cobordism $(X _ n , \omega _ n)$. Notice that \textit{a priori} such definition for $F ^ L _ {K , J _ 0 J _ 1}$ depends on the neck-length of the contact closure(s), or equivalently on $n$, and our goal is to remove such dependency (c.f. \cite[Proposition 4.3]{kst}), a crucial step in the final direct limit argument.\\

\noindent\textit{Step 2.} To remove the \emph{a priori} dependency of $F ^ L _ {K , J _ 0 J _ 1}$ on $n$, we show that the following diagram commutes.
\[
\begin{tikzcd}[row sep=1.2em , column sep = 0.1em]
\ech ^ L (M , \Gamma , e \alpha , J _ 0) \arrow[rr,"\Phi _ n ^ {L , J _ 0}"] \arrow[dr,swap,equal] \arrow[dd,swap,"F ^ L _ {K , J _ 0 J _ 1, n}"] &&
  \ech ^ L (Y _ n , e \alpha _ n) \arrow[dd,swap,"\Phi ^ L (X _ n)"' near start] \arrow[dr,"\Psi ^ L _ {n, m}"] \\
& \ech ^ L (M , \Gamma , e \alpha , J _ 0) \arrow[rr,crossing over,"\Phi ^ {L , J _ 0} _ m" near start] &&
  \ech ^ L (Y _ m , e \alpha _ m) \arrow[dd,"\Phi ^ L (X _ m)"] \\
\ech ^ L (M ' , \Gamma , \alpha ' , J _ 1) \arrow[rr,"\Phi ^ {L , J _ 1} _ n" near end] \arrow[dr,swap,equal] && \ech ^ L (Y _ n ' , \alpha _ n ') \arrow[dr,swap,"\Psi ^ L _ {n, m}"'] \\
& \ech ^ L (M' , \Gamma , \alpha', J _ 1) \arrow[rr,"\Phi ^ {L , J _ 1} _ m"] \arrow[uu,<-,crossing over,"F ^ L _ {K , {J _ 0}' {J _ 1}' , m}" near end]&& \ech ^ L (Y _ m ' , \alpha _ m ')
\end{tikzcd}
\]
In the above diagram, we write $F ^ L _ {K , J _ 0 J _ 1, n}$ to indicate the \emph{a priori} dependence of $F ^ L _ {K , J _ 0 J _ 1}$ on $n$. It suffices to show that the following diagram commutes
\[
    \begin{tikzcd}
        \ech ^ L (Y _ n , e \alpha _ n) 
        \rar["\Psi _ {n, m} ^ L" ] 
        \dar["\Phi ^ L (X _ n )"] 
        & 
        \ech ^ L (Y _ m , e \alpha _ m)  
        \dar["\Phi ^ L (X _ m )"] 
        \\
        \ech ^ L (Y _ n ' , \alpha _ n ') 
        \rar["\Psi _ {n, m} ^ L"] 
        &
        \ech ^ L (Y _ m ' , \alpha _ m ')
    \end{tikzcd}
    \]
where the bottom $\Psi ^ L _ {n, m }$ should be regarded as the version of the isomorphism in \eqref{n_to_m_iso_closure} for $(M ' , \Gamma', \alpha ')$ while the above is for $(M , \Gamma, \alpha)$. 

Indeed, it is proved by pulling all the datum over $X _ m$ back to $X _ n$ via a diffeomorphism $\varphi _ {n ,m} : X _ n \rightarrow X _ m$ that restricts to the identity on $X \subset X_ n, X _ m$ and is constructed between $X _ n \setminus X = [0 ,1] _ s \times (Y _ n \setminus M) $ and $X _ m \setminus X = [0,1] _ s \times (Y _ m \setminus M)$ in an $s$-invariant way via a map (c.f. proof of \cite[Proposition 3.8]{kst})
\[
\varphi _ {n , m} :
Y _ n \setminus M
\rightarrow
Y _ m \setminus M
\]
that is of the form
    \[
    id \times f _ {n , m} : 
    R _ + (\Gamma) \times [-n , n] _ t 
    \rightarrow
    R _ + (\Gamma) \times [- m , m] _ t
    \]
on the neck part in which $f _ {n , m} (t) = \pm (m-n) + t$ whenever $t$ is close to $\pm n$ while on the part of solid tori:
   \[
    \mathfrak{f} _ {n, m} \times id : 
    D  _ {2 + 2n} \times \Gamma \rightarrow 
    D  _ {2 + 2m} \times \Gamma
    \]
we make certain extension from the previous neck part. Such construction in fact can be done in an $\nu$-parametric way with $n \leq \nu \leq m$ by considering a smooth family of embeddings 
\begin{align*}
    f _ {n , \nu} : 
    [-n , n] 
    \rightarrow 
    [-m , m] \\
    \mathfrak{f} _ {n , \nu} :
    D  _ {2 + 2n}
    \rightarrow
    D _ {2 + 2m}
\end{align*}
whose images are respectively $[-\nu , \nu]$ and $D _ {2 + 2\nu}$ thus resulting in a smooth family of diffeomorphisms:
$\varphi _ {n ,\nu} : X _ n \rightarrow X _ {\nu}$. 

Here is an interlude concerning the filtered $\ech$ cobordism map of a symplectization and a slight variant of it. Let $(Y , \lambda)$ be a closed contact 3-manifold with $\lambda$ being $L$-nondegenerate (meaning that all the Reeb orbits with action less than $L$ are nondegenerate) then the $\ech$ cobordism map of the symplectization $([0 , \varepsilon] _ s \times Y , d (e ^s \alpha))$ is the composition (c.f. \cite[Corollary 5.8]{ht2}):
\begin{equation}
    \label{eq:bod_map_of_sym}
    \ech ^ {L} (Y  , e ^ {\varepsilon} \alpha) 
    \xrightarrow{s}
    \ech ^ {e ^ {- \varepsilon}L} (Y , \alpha)
    \xrightarrow{i _ {e ^ {- \varepsilon} L , L }}
    \ech ^ {L} (Y , \alpha)
\end{equation}
in which $s$ denotes the scaling isomorphism and $i _ {e^ {- \varepsilon} L , L}$ is an inclusion induced map. To set the stage for the upcoming statement notice that for a sutured contact manifold $(M , \Gamma , \alpha)$ the smooth family of diffeomorphisms:
\[
\varphi _ {n , s} : 
Y _ n 
\rightarrow 
Y _ s
\]
between contact closures of $(M , \Gamma , \alpha)$ with various neck-lengths $s \in [n ,m]$ defines an isotopy of contact forms on $Y _ n$ by pull-back, i.e. $(Y _ n , \varphi _ {n ,s}  ^ * \alpha _ s) := (Y _ n , a _ s)$. Then according to the proof of \cite[Lemma 4.1]{kst} given $\varepsilon > 0$,
\[
([0 , \varepsilon] _ s \times Y _n , d(e ^ s a _ {\phi (s)})) 
\]
would defines an exact symplectic cobordism as long as $n$ is close enough to $m$ where $\phi : [0 , \varepsilon] _ s \rightarrow [n ,m]$ is a smooth function that equals $n$ (resp. $m$) when $s$ is near to $0$ (resp. $\varepsilon$). The induced cobordism map according to part of the proof of \cite[Proposition 4.3]{kst} is the following composition:
\[
\begin{tikzcd}
    \ech ^ {L} (Y _ n , e ^ {\varepsilon} a _ m) \rar["s"] 
    & 
    \ech ^ {e ^ {- \varepsilon}L} (Y _ n , a _ m) \dar["i _ {e ^ {-\varepsilon}L , L }"] 
    &  
    \\
     & 
     \ech ^ {L} (Y _ n , a _ m) \rar["\Psi _ {m , n} ^ L"'] 
     & 
     \ech ^ {L} (Y _ n , \alpha _ n)
\end{tikzcd}
\]
or equivalently by \eqref{eq:bod_map_of_sym}
\begin{equation}
\label{eq:technical_relation_lower}
\Psi ^ L _ {m , n} \circ \Phi ^ L ([0,\varepsilon] _ s \times Y _ n , d (e ^ s a _ m)).
\end{equation}
There is an analogous statement for the the cobordism
\[
([1 - \varepsilon , 1] _ s \times Y _ n , 
d (e ^s a _ {\phi (1-s)}))
\]
whose induced cobordism map is the composition
\[
    \begin{tikzcd}
    \ech ^ {L} (Y _ n , e  \alpha _ n) 
    \rar["\Psi _ {n , m} ^ L"] 
    & 
    \ech ^ {L} (Y _ n , e \alpha _ m) 
    \dar["s"] 
    &  
    \\
     & 
     \ech ^ {e^ {-1} L} (Y _ n , \alpha _ m) \rar["i _ {e^ {-1} L , L}"'] 
     & 
     \ech ^ {L} (Y _ n , \alpha _ m)
    \end{tikzcd}
    \]
or equivalently
\begin{equation}
\label{eq:technical_relation_upper}
\Phi ^ L ([1 - \varepsilon , 1] _ s \times Y _ n , d (e ^s a _ m)) \circ \Psi ^ L _ {n ,m}.
\end{equation}

To finish the proof we continue to follow the strategy of \cite[Proposition 4.3]{kst}, as in \cite[Lemma 6.5]{ht2}, by interpolating $(X _ n , \varphi _ {n , m} ^ {*} \omega _ m )$ to $(X _ n , \varphi ^ * _ {n , n} \omega _ n = \omega _ n)$ but in a way only near the incoming/outgoing ends and show that the resulting exact symplectic cobordism is 
\begin{itemize}\leftskip-0.25in
    \item 
    homotopic to $(X _ n , \omega _ n)$ thus sharing the same cobordism induced maps by \cite[Theorem 1.9]{ht2},
    \item 
    and conjugates to the cobordism map induced from the original $(X _ n , \varphi ^ * _ {n, m} \omega _ m)$ via the maps $\Psi ^ L _ {n, m}$'s using \eqref{eq:technical_relation_lower} and \eqref{eq:technical_relation_upper}.
\end{itemize}

To spell out the construction: the smooth family of diffeomorphisms 
$\varphi _ {n , \nu} : X _ n \rightarrow X _ {\nu}$ would produce a smooth $\nu$-parameter family of (exact) symplectic forms $\{ \varphi ^ * _ {n , \nu} \omega _ {\nu} \} _ {\nu \in [n , m]}$ over $X _ n$ and can be identified with the symplectizations: 
\[
([0 , \varepsilon] _ s \times Y ' _ n , 
d (e ^ s a _ {\nu} '))
\]
and
\[
([1 - \varepsilon , 1] _ s \times Y _ n ,
d ( e ^ s a _ {\nu}))
\]
respectively near an $\varepsilon$-collar neighborhoods of the incoming/outgoing end in $X _ n$ for all $\nu \in [n ,m]$ in which $a _ {\nu} = \varphi _ {n , \nu} ^ * \alpha _ {\nu}$ and $a _ {\nu} '= \varphi _ {n , \nu} ^ * \alpha _ {\nu} '$. 

We then pick a smooth $\nu$-parameter family of smooth (non-decreasing) functions $\phi _ {\nu} : [0 , \varepsilon] _ s\rightarrow [n,m]$ so that $\phi _ {\nu} = n$ (resp. $m$) when $s$ is near to $0$ (resp. $\varepsilon$) and modify the family $\{ \varphi ^ * _ {n , \nu} \omega _ {\nu}\} _ {\nu \in [n, m]}$ along the above mentioned symplectization parts according to 
\[
([0 , \varepsilon] _ s \times Y ' _ n , 
d (e ^ s a _ {\phi _ {\nu} (s)} '))
\]
and
\[
([1 - \varepsilon , 1] _ s \times Y _ n ,
d ( e ^ s a _ {\phi _ {\nu} (1-s)}))
\]
whose form datum would be symplectic as long as $n<m$ are close enough. To the general case we may proceed step by step : $n < \cdots < n _ i < n _ {i + i} < \cdots < m $ since the key is to make $d \phi _ {\nu} (s) / ds$ small which can be achieved if $n _ {i + 1} - n _ i$ is. The upshot is that this modified family is the promised homotopy of cobordisms and one can show the desired properties following their original strategy.\\

\noindent\textit{Step 3.} In this step we shall see the various maps $F ^ L _ {K , J _ 0 J _ 1}$ are equivariant in the spots of $J _ 0, J _1$ thus collectively define a map
\[
F ^ L _ K : 
\ech ^ L ( M , \Gamma , e \alpha)
\rightarrow
\ech ^ L (M ' , \Gamma , \alpha ')
\]
between respective canonical filtered $\ech$ groups. Recall that in \cite{ht2} it was shown that the filtered $\ech$ groups does not depend on the choice of the almost complex structures used to define them. The corresponding result for the sutured version was proved in \cite[Chapter 3]{kst} where a canonical group $\ech ^ L (M , \Gamma , \alpha)$ is assigned to a given sutured contact manifold and $L > 0$ as the direct limit of a transitive system indexed by all the possible tailored almost complex structures $J$ (see \cite[Theorem 3.10]{kst}) 
\[
\big(  \{ \ech ^ L (M , \Gamma , \alpha , J) \} _ J , \{ \Phi ^ L _ {J , J '}\} _ {J , J '}\big)
\]
and the required equivariance of $F ^ L _ {K , J _ 0 J _ 1}$ can be shown by establishing the following diagram
\begin{equation}
\label{eq:F^L_K_equiv_ with_ J}
\begin{tikzcd}[row sep=1.2em , column sep = 0.1em]
\ech ^ L (M , \Gamma , e\alpha , J _ 0 ) \arrow[rr,"\Phi ^ {L , J _ 0} _ n "] \arrow[dr,swap,"\Phi ^ L _ {J _ 0 {J _ 0}'}"] \arrow[dd,swap,"F ^ L _ {K , J _ 0 J _ 1}"] & &
  \ech ^ L (Y _ n , e \alpha _ n) \arrow[dd,swap,"\Phi ^ L (X _ n)"' near start] \arrow[dr,equal] 
  \\
& \ech ^ L (M , \Gamma , e\alpha , {J _ 0}') \arrow[rr,crossing over,"\Phi ^ {L , {J _ 0}'} _ n" near start] &&
  \ech ^ L (Y _ n , e \alpha _ n) \arrow[dd,"\Phi ^ L (X _ n)"] 
  \\
\ech ^ L (M' , \Gamma , \alpha ' , J _ 1 ) \arrow[rr,"\Phi _ n ^ {L , J _ 1}" near end] \arrow[dr,swap,"\Phi ^ L _ {J _ 1 {J _ 1}'}"] && \ech ^ L (Y _ n ' , \alpha _ n ') \arrow[dr,swap,equal] 
\\
& \ech ^ L (M' , \Gamma , \alpha ' , {J _ 1}') \arrow[rr,"\Phi ^ {L , {J _ 1}'} _ n"] \arrow[uu,<-,crossing over,"F ^ L _ {K , {J _ 0}' {J _ 1}'}" near end]&& \ech ^ L (Y _ n ' , \alpha _ n ')
\end{tikzcd}
\end{equation}
in which the front/back diagram commute because that is the way the maps $F ^ L _ {K , J _ 0 J _ 1}$ ans its prime version are defined; while the top/bottom the way the maps $\Phi ^ L _ {J _ 0 {J _ 0}'}$ and the $J _ 1 {J _ 1}'$-version are defined, see \cite[Lemma 3.9]{kst}. 

To establish the diagram in order for taking the direct limit:
\[
 \begin{tikzcd}
        \ech ^ L (M , \Gamma , e \alpha) 
        \rar["F _ K ^ L "] 
        \dar["i _ {L , L '}"'] 
        & 
        \ech ^ L (M ' , \Gamma , \alpha ' ) 
        \dar["i _ {L , L '}"] 
        \\
        \ech ^ {L'} (M , \Gamma , e \alpha) \rar["F _ K ^ {L'}"'] 
        & 
        \ech ^ {L'} (M ' , \Gamma , \alpha ' )
    \end{tikzcd}
\]
it is enough to show the following diagram for fixed $J _ 0, J _ 1$ commutes:
\begin{equation}
\label{dia:last}
 \begin{tikzcd}
        \ech ^ L (M , \Gamma , e \alpha , J _ 0) 
        \rar["F _ {K , J _ 0 J _ 1}^ L "] 
        \dar["i ^ {L , L '} _ {J _ 0}"'] 
        & 
        \ech ^ L (M ' , \Gamma , \alpha ' , J _ 1) 
        \dar["i ^ {L , L '} _ {J _ 1}"] 
        \\
        \ech ^ {L'} (M , \Gamma , e \alpha , J _ 0) 
        \rar["F _ {K, J _ 0 J _ 1} ^ {L'}"'] 
        & 
        \ech ^ {L'} (M ' , \Gamma , \alpha ' , J _ 1 )
    \end{tikzcd}
\end{equation}
and is equivariant under the maps $\Phi ^ L _ {J _ 0 {J _ 0}'}$, $\Phi ^ L _ {J _ 1 {J _ 1}'}$ and the two $L'$-versions: $\Phi ^ {L'} _ {J _ 0 {J _ 0}'}$, $\Phi ^ {L'} _ {J _ 1 {J _ 1}'}$ in the spots of $J _ 0, J _ 1$. Indeed, \eqref{dia:last} commutes by the way $F ^ L _ {K , J _ 0 J _ 1}$ and $F ^ {L'} _ {K , J _ 0 J _ 1}$ are defined, \eqref{eq:inclusion_plus_J} and the (Inclusion)-part of \cite[Theorem 1.9]{ht2}; while the latter part concerning its equivariance in the spots of $J _ 0, J _ 1$: for $F ^ L _ {K , J _ 0 J _ 1}$ and $F ^ {L'} _ {K , J _ 0 J _ 1}$, they have been proved in \eqref{eq:F^L_K_equiv_ with_ J}; as for $i ^ {L , L '} _ {J _ 0}$ and $i ^ {L , L '} _ {J _ 1}$, see \cite[Theorem 3.10]{kst}.\qed

\newpage
\bibliography{biblio}
\end{document}